\title[Schr\"odinger equation on homogeneous trees]
{Schr\"odinger equation\\
on homogeneous trees}
\author{Alaa Jamal Eddine}
\address{}
\email{alaa.jamal-eddine@etu.univ-orleans.fr}
\date{\today}
\subjclass[2000] {Primary 35Q55, 43A90;\\
\hspace{3cm}Secondary 22E35, 43A85, 81Q05, 81Q35, 35R02}
\keywords{homogeneous tree, nonlinear Schr\"odinger equation, dispersive estimate, Strichartz estimate, scattering}
\newcommand\1{1\hskip-1mm\text{\rm I}}
\renewcommand\Im{\operatorname{Im}}
\newcommand{\N}{\mathbb{N}}
\renewcommand\Re{\operatorname{Re}}
\newcommand{\R}{\mathbb{R}}
\newcommand\ssb{\hskip-.25mm}
\newcommand\ssf{\hskip.25mm}
\newtheorem{theoreme}{Theorem}[section]
\newtheorem{propositione}[theoreme]{Proposition}
\newtheorem{lemmae}[theoreme]{Lemma}
\newtheorem{corollarye}[theoreme]{Corollary}
\newtheorem{defne}{Definition}[section] 
\newcommand{\pp}{<}
\newcommand{\pg}{>}
\newcommand{\dsps}{\displaystyle}
\begin{document}
%
\begin{abstract}
Let $\mathbb{T}$ be a homogeneous tree and $\mathcal{L}$ the Laplace operator on $\mathbb{T}$. We consider the semilinear Schr\"odinger equation associated to $\mathcal{L}$ with a power-like nonlinearity $F$ of degree $\gamma$. We first obtain dispersive estimates and Strichartz estimates with no admissibility conditions. We next deduce global well-posedness for small $L^2$ data with no gauge invariance assumption on the nonlinearity $F$. On the other hand if $F$ is gauge invariant,  $L^2$ conservation leads to global well-posedness for arbitrary $L^2$ data. Notice that, in contrast with the Euclidean case, these global well-posedness results hold for all finite $\gamma\pg1$. 
We finally prove scattering for arbitrary $L^2$ data under the gauge invariance assumption.
\end{abstract}
\maketitle

\section{Introduction}
In this paper we consider the semilinear Schr\"odinger equation  
\begin{equation}\label{semilin}
\begin{cases}
i\partial_t u +\mathcal{L}u=F(u)\\
u(0)=f
\end{cases}
\end{equation}
associated with the positive laplacian $\mathcal{L}$ on homogeneous trees $\mathbb{T}$ of degree $Q+1\geq 3.$
The essential tools for the study of (\ref{semilin}) are  dispersive and Strichartz type estimates. In the Euclidean case, (\ref{semilin}) has been considered for large classes of nonlinearities (see \cite{tao2}, \cite{caz} and the references therein).
In this case, the dispersive estimate   
$$\left\|e^{it\Delta}f\right\|_{L^\infty(\mathbb{R}^n)}\le C\,|t|^{-n/2}\left\|f\right\|_{L^1(\mathbb{R}^n)}$$ 
holds for the homogeneous problem. A well known procedure (introduced by Kato \cite{k}, developed by Ginibre \& Velo \cite{gv}
and perfected by Keel \& Tao \cite{kt}) then leads to the following Strichartz estimates
\begin{equation}\label{stric}
\left\|u\right\|_{L^p(I,\,L^q(\mathbb{R}^n))}\le C\left\|f\right\|_{L^2(\mathbb{R}^n)}+C\left\|F\right\|_{L^{\tilde{p}^{\prime}}(I,\,L^{\tilde{q}^{\prime}}(\mathbb{R}^n))}
\end{equation}
for the linear problem
\begin{equation}\label{lin prob}
\begin{cases}
i\partial_t u(t,x) +\Delta u(t,x)= F(t,x)\,,\\
u(0,x)=f(x).
\end{cases}
\end{equation} 
These estimates hold for all bounded or unbounded time interval $I$ and for all couples $(p,q)$, $(\tilde{p},\tilde{q})$ $\in [2,\infty] \times [2,\infty)$ satisfying the admissibility condition 
\begin{equation}\label{admis R}
\frac{2}{p}+\frac{n}{q}=\frac{n}{2}.
\end{equation}
Notice that both endpoints $(p,q)=(\infty,2)$ and $(p,q)=(2,\frac{2n}{n-2})$ are included in dimension $n\geq 3$ while only the first one is included in dimension $n=2$.
In view of their important application to nonlinear problems, many attempts have been made to study the dispersive properties of $(\ref{semilin})$ on various Riemannian manifolds (see \cite{ap}, \cite{apv}, \cite{b}, \cite{bgt}, \cite{bo}, \cite{gp}, \cite{is}, \cite{p2}).
More precisely, dispersive and Strichartz estimates for the Schr\"odinger equation on real hyperbolic spaces $\mathbb{H}^n$, which are manifolds with constant negative curvature, have been obtained by Banica, Anker and Pierfelice, Ionescu and Staffilani (\cite{b}, \cite{ap}, \cite{is}). In \cite{ap} and \cite{is}, the authors have obtained sharp dispersive and Strichartz estimates for solutions to the homogeneous and inhomogeneous problems with no radial assumption on the initial data and for a wider range of couples than in the Euclidean case.
More precisely they have obtained $(\ref{stric})$ for admissible couples in the range

\begin{equation}\label{admissib H}
\{(\frac{1}{p},\frac{1}{q})\in (0,\frac{1}{2}]\times (0,\frac{1}{2}) ; \frac{2}{p}+\frac{n}{q}\geq\frac{n}{2}\}\cup\{(0,\frac{1}{2})\}.
\end{equation}

In this paper we consider the Schr\"odinger equations on homogeneous trees $\mathbb{T}$, which are discrete analogs of 
hyperbolic spaces and more precisely 0-hyperbolic spaces according to Gromov.
In \cite{s} A. Setti has already investigated the $L^p-L^q$ mapping properties of the complex time heat operator $e^{\tau\mathcal{L}}$ on $\mathbb{T}$ for $\Re \tau \geq 0$. His study is based on a careful kernel analysis, using the Abel transform and reducing this way to $\mathbb{Z}$.
Our paper is organized as follows.
In Section 2, we recall the structure of homogeneous trees and spherical harmonic analysis thereon. In Section 3, we resume the analysis of the Schr\"odinger kernel and deduce our main two estimates, that we now state.\\
\;
\textbf{Dispersive Estimate}~: Let $2\pp q\le\infty$. Then 
\begin{equation*}
\left\|e^{it\mathcal{L}}\right\|_{L^{{q}^{\prime}}(\mathbb{T})\to L^{q}(\mathbb{T})}\lesssim
\begin{cases}
1 \qquad\hspace{0.2cm} \textrm{if}\;|t|<1,
\\|t|^{-\frac{3}{2}} \quad \textrm{if}\;|t|\geq 1.
\end{cases}
\end{equation*}
Notice that in the limit case $q=2$, we have $L^2$ conservation $\left\|e^{-it\mathcal{L}}f\right\|_{L^2(\mathbb{T})}= \left\|f\right\|_{L^2(\mathbb{T})}$ for all $t\in\mathbb{R}$.\\ 
\;
\textbf{Strichartz estimates}~:\, Assume that 
$(\frac{1}{p},\frac{1}{q})$ and $(\frac{1}{\tilde{p}},\frac{1}{\tilde{q}})$ belong to the square 
\begin{equation*}\label{admissible square}
\mathcal{C}=(0,\frac{1}{2}]\times[0,\frac{1}{2})\cup\left\{(0,\frac{1}{2})\right\}.
\end{equation*}
Then the solution to the linear problem $(\ref{lin prob})$ satisfy
$$\left\|u(t,x)\right\|_{L^{\infty}_{t}L^{2}_{x}}+\left\|u(t,x)\right\|_{L^{p}_{t}L^{q}_{x}}\lesssim\left\|f(x)\right\|_{L^{2}_{x}}+\left\|F(t,x)\right\|_{L^{\tilde{p}^{\prime}}_{t}L^{\tilde{q}^{\prime}}_{x}}.$$
\\
Notice that the set of admissible couples for $\mathbb{T}$ is much wider than the corresponding set $(\ref{admissib H})$ for real hyperbolic spaces $\mathbb{H}^n$ which was itself wider than the admissible set $(\ref{admis R})$. This striking result may be regarded as an effect of strong dispersion in hyperbolic geometry, combined with the absence of local obstructions.
\begin{figure}[ht]\label{square}
\begin{center}
{\includegraphics[width=6cm, height=4cm]{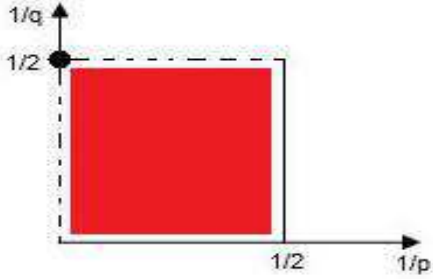}}\quad
\caption{Admissible set for $\mathbb{T}$}
\end{center}
\end{figure}
\\
In Section 4, we use these estimates to prove well-posedness and scattering results for the nonlinear Cauchy problem $(\ref{semilin}).$
We will deal with power-like nonlinearities in the following sense:
 there exist constants $\gamma\pg 1$ and $C\pg 0$ such that
\begin{equation}\label{nonlinearity}
\begin{cases}
|F(u)|\le C|u|^{\gamma},\\
|F(u)-F(v)|\le C(|u|^{\gamma-1}+|v|^{\gamma-1})|u-v|.
\end{cases}
\end{equation}

Recall that $F$ is said to be \textit{gauge invariant} if 
\begin{equation}\label{gauge}
\Im\{F(u)\overline{u}\}=0
\end{equation}
and that $L^2$ conservation holds in this case~:
\begin{equation}\label{conservation}
\forall t,\;\int_\mathbb{T}|u(t,x)|^2 d\mu (x)=\int_\mathbb{T}|f(x)|^2 d\mu(x).
\end{equation}
Here is a typical example of such a nonlinearity~: 
\begin{equation}\label{typical}
F(u)=\lambda|u|^{\gamma-1}u\hspace{0.8cm} \textrm{with}\, \lambda\in\mathbb{R},\,\gamma\pg 1.
\end{equation}
In this particular case, another quantity is conserved, namely the energy 
\begin{equation}\label{energy}
E(t)=\frac{1}{4}\dsps\sum_{x,y\; d(x,y)=1}|u(t,x)-u(t,y)|^2+\lambda\, \frac{Q+1}{\gamma+1}\dsps\sum_{x\in\mathbb{T}}|u(t,x)|^{\gamma+1}.
\end{equation}
Notice that $E(t)\pg 0$ in the so-called defocusing case $\lambda\pg 0$. Here are our well-posedness results.\\
\newline
\textbf{Well-posedness}. Assume $1\pp\gamma\pp\infty$. Then the NLS $(\ref{semilin})$ is globally well-posed for small $L^2$ data. It is locally well-posed for arbitrary $L^{2}$ data. Moreover, in the gauge invariance case $(\ref{gauge})$, local solutions extend to global ones.\\
$\,$

Notice that there is no restriction on the power $\gamma\pg 1$ contrarily to the Euclidean case and even to the hyperbolic space case. Recall on one hand that, on $\mathbb{R}^n$, global  well-posedness for small $L^2$ data is known to hold for $\gamma= 1+\frac{4}{n}$ and local well-posedness for arbitrary $L^2$ data if $1\pp\gamma\pp 1+\frac{4}{n}$ (see \cite{caz}). Recall on the other hand that better results hold on $\mathbb{H}^n$, namely  global well-posedness for small $L^2$ data  in the whole range $1\pp\gamma\le\ 1+\frac{4}{n}$, and $L^2$ local well-posedness for arbitrary data in the range $1\pp\gamma\pp 1+\frac{4}{n}$ (see \cite{ap}). In both cases, $L^2$ local well-posedness for arbitrary data extends  who $L^2$ global well-posedness when the nonlinearity $F$ satisfies the gauge invariance property $(\ref{gauge})$.
Here are our scattering results.\\
\;
\textbf{Scattering}\,. Assume $1\pp\gamma\pp\infty$. Then under the gauge invariance condition $(\ref{gauge})$, for any data $f\in L^2$, the unique global solution $u(t,x)$ to the NLS $(\ref{semilin})$ scatters to linear solutions at infinity, i.e there exist $u_\pm\in L^2$ such that
$$\left\|u(t)-e^{it\mathcal{L}}u_\pm\right\|_{L^{2}(\mathbb{T})}\to 0 \;\textrm{as}\;t\to\pm\infty.$$
\\
Notice again the absence of restriction on $\gamma\pg 1.$ Thich is in sharp contrast with $\mathbb{R}^n,$ where scattering for small $L^2$ data holds in the critical case $\gamma=1+\frac{4}{n}$ but may fail for $1\pp\gamma\pp 1+\frac{4}{n}$ (see \cite{caz}). Our result is also better than on $\mathbb{H}^n,$ where scattering for small $L^2$ data holds in the range $1\pp\gamma\le 1+\frac{4}{n}$ (see \cite{ap}).
\section{Homogeneous Trees }
A homogeneous tree of degree $Q+1$ is an infinite connected graph $\mathbb{T}$ with no loops , in which every vertex is adjacent to $Q+1$ other vertices. We shall identify $\mathbb{T}$ with its set of vertices. $\mathbb{T}$ carries a natural distance d and a natural measure $\mu$. Specifically, $d(x,y)$ is the number of edges of the shortest path joining $x$ to $y$ and $\mu$ is the couting measure. $L^{p}(\mathbb{T})$ denotes the associated Lebesgue space, whose norm is given by
\begin{center}
\begin{displaymath}
\left\|f\right\|_{L^{p}(\mathbb{T})}=
\left\{ \begin{array}{ll}
\Bigl(\dsps\int_{\mathbb{T}}|f(x)|^{p}d\mu\Bigr)^{\frac{1}{p}}=\,\Bigl(\dsps\sum_{x\in\mathbb{T}}|f(x)|^{p}\Bigr)^{\frac{1}{p}}& \textrm{if $1\leq p<\infty$,}
\\ \mathop{\sup}_{x\in\mathbb{T}}|f(x)|& \textrm{if $p=\infty$}
\end{array}\right.
\end{displaymath}
\end{center}
There is no Sobolev spaces theory on $\mathbb{T}$. For instance, one might define the Sobolev space $H^{1}(\mathbb{T})$ by 
$$H^{1}(\mathbb{T})=:\left\{f\in L^{2}(\mathcal{V})|\,|\nabla|f\in L^{2}(\mathcal{E})\right\}$$
where $\mathcal V$ denotes the set of vertices, $\mathcal E$ the set of edges and
$$|\nabla|f(\overline{xy})=:\,|f(y)-f(x)|,$$
is the ``positive gradient''. As 
\begin{equation*}
\left\|\nabla f\right\|^{2}_{L^2(\mathcal{E})}=\;\frac{1}{2}\dsps\sum_{x,y\in \mathcal{V}\, d(x,y)=1} |f(x)-f(y)|^2\le 2(Q+1)\dsps\sum_{x\in\mathcal{V}}|f(x)|^2=\,2(Q+1)\left\|f\right\|^2_{L^2(\mathcal{V})}.
\end{equation*}
we see that  $H^1(\mathbb{T})=L^{2}(\mathbb{T}).$

We fix a base point o and we denote by $|x|=d(x,o)$ the distance to o. Functions depending only on $|x|$ are called radial. If $E(\mathbb{T})$ is a space of functions defined on $\mathbb{T},$ $E(\mathbb{T})^{\sharp}$ denotes the subspace of radial functions in $E(\mathbb{T}).$\\
\;
Let $G$ be the group of isometries of the metric space $\mathbb{T}$ and $K$ the stabiliser of o in $G$.
\begin{propositione}$($Tits \cite{tits}$)$ For every finite subset $\mathcal{F}$ of $\mathcal{V}$, denote by $Aut_{\mathcal{F}}(\mathbb{T})$ the group of automorphisms $g$ of $\mathbb{T}$ such that $g(x)=x$ for all $x\in \mathcal{F}.$ Then G is equipped with a topology of locally compact totally discontinuous group such that the subgroups $Aut_{\mathcal{F}}(\mathbb{T})$ form a fundamental system of neighborhoods of the identity in $G$. Moreover a subgroup H is  maximal, open, compact in $G$ if and only if H is the stabiliser of a point $x$ in $\mathcal{V}.$
\end{propositione}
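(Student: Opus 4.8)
The plan is to realise $G$ as a topological transformation group of the discrete vertex set $\mathcal{V}$ and to read off every asserted property from this description. First I would embed $G$ into the symmetric group $\operatorname{Sym}(\mathcal{V})$ and equip the latter with the topology of pointwise convergence, i.e. the subspace topology inherited from the product $\mathcal{V}^{\mathcal{V}}$ in which $\mathcal{V}$ carries the discrete topology. A routine verification shows $\operatorname{Sym}(\mathcal{V})$ is a topological group for this topology: each subbasic condition $g(x)=y$ is locally constant, so both $(g,h)\mapsto gh$ and $g\mapsto g^{-1}$ are continuous. Since the defining relations $d(gx,gy)=d(x,y)$ are closed conditions, $G$ is a closed subgroup and thus itself a topological group. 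A subbasic neighbourhood of the identity is $\{g : g(x)=x\}$ for a single vertex $x$, so a basic neighbourhood is exactly $\{g : g|_{\mathcal{F}}=\mathrm{id}\}=Aut_{\mathcal{F}}(\mathbb{T})$ for a finite $\mathcal{F}\subset\mathcal{V}$; this is the asserted fundamental system of neighbourhoods. The group is Hausdorff because two distinct automorphisms disagree at some vertex.

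The second step handles total disconnectedness (\emph{total discontinuity} in the statement) and local compactness together. Each $Aut_{\mathcal{F}}(\mathbb{T})$ is open by construction and also closed: if $g\notin Aut_{\mathcal{F}}(\mathbb{T})$ then $g(x)\neq x$ for some $x\in\mathcal{F}$, and $\{h : h(x)=g(x)\}$ is an open neighbourhood of $g$ disjoint from $Aut_{\mathcal{F}}(\mathbb{T})$. Thus the identity has a neighbourhood basis of clopen subgroups, forcing $G$ to be totally disconnected. For local compactness the decisive point is the \emph{local finiteness} of $\mathbb{T}$: the spheres $S_n=\{x : d(x,o)=n\}$ are finite, with $|S_n|=(Q+1)Q^{\,n-1}$ for $n\geq1$. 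Every $g\in K:=Aut_{\{o\}}(\mathbb{T})$ preserves each $S_n$, so $K$ sits inside the product $\prod_{x\in\mathcal{V}}S_{|x|}$ of finite discrete spaces, which is compact by Tychonoff; being cut out by the closed conditions $d(gx,gy)=d(x,y)$, $K$ is a closed, hence compact, subset. Therefore the clopen subgroup $K$ is a compact neighbourhood of the identity and $G$ is locally compact.

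Finally I would characterise the maximal compact open subgroups. Homogeneity makes $G$ transitive on $\mathcal{V}$, so every vertex stabiliser $K_x$ is conjugate to $K$ and is hence compact as well as open. The essential external input is the fixed-point theorem for isometric actions on a tree: a subgroup with a bounded orbit fixes a vertex or the midpoint of an edge. A compact subgroup $H$ has bounded orbits, since the orbit map $h\mapsto h(x)$ carries $H$ onto a compact, hence finite, subset of the discrete set $\mathcal{V}$; so $H$ fixes a vertex or an edge-midpoint and lies in some $K_x$ or in some setwise edge stabiliser $G_e$. Using that $K_x$ is transitive on every sphere about $x$, its only common fixed point in the geometric realisation is $x$ itself, so $K_x$ is contained in no other vertex stabiliser and in no edge stabiliser; hence $K_x$ is maximal. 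Conversely a maximal compact open $H$ that fixes a vertex $x$ satisfies $H\subseteq K_x$ and therefore equals $K_x$, which yields the claim.

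The step I expect to be the genuine obstacle is this forward direction, and it is also where the statement must be read with care. The fixed-point theorem offers two alternatives, and the second is real: an isometry inverting an edge $e=\{x,y\}$ fixes its midpoint but \emph{no} vertex, because adjacency forces $d(z,x)\neq d(z,y)$ for every vertex $z$. Consequently the edge stabiliser $G_e$ is a compact open subgroup contained in no $K_z$, and the same maximality argument shows it is maximal. Thus the complete list of maximal compact open subgroups of the full isometry group consists of the vertex stabilisers \emph{together with} the edge stabilisers; the proposition isolates the vertex-fixing alternative, which is precisely the only one that survives once colour-reversing (edge-inverting) isometries are set aside. Making this dichotomy explicit, and checking that edge inversions genuinely occur on $\mathbb{T}$ when $Q+1\geq3$, is the delicate part of the argument.
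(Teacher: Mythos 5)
The paper never proves this proposition; it is quoted from Tits \cite{tits} (see also \cite{ca} and \cite{ftn}), so your argument can only be measured against the cited literature. By that measure, your topological construction is the standard one and is essentially correct: pointwise convergence inherited from $\operatorname{Sym}(\mathcal{V})$, the clopen subgroups $Aut_{\mathcal{F}}(\mathbb{T})$ as a neighbourhood basis (giving total disconnectedness), and compactness of $K$ via Tychonoff applied to $\prod_{x}S_{|x|}$. One point you gloss over: to identify $K$ with the subset of $\prod_{x}S_{|x|}$ cut out by the closed conditions $d(gx,gy)=d(x,y)$, you must also know that every distance-preserving self-map of $\mathbb{T}$ is automatically surjective; otherwise that closed set could a priori contain non-invertible limits, exactly as the closure of $\operatorname{Sym}(\mathbb{N})$ in $\mathbb{N}^{\mathbb{N}}$ contains non-surjective injections. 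This is true and easy (the image is a subtree each of whose vertices has full degree $Q+1$, hence is all of $\mathbb{T}$ by connectedness), but it is a needed step, not a formality.

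The substantive finding is in your last paragraph, and you are right: the ``only if'' half of the statement, read literally for the full isometry group $G$, is false. Edge inversions do exist on $\mathbb{T}$ (delete an edge $\{x,y\}$ and swap the two resulting half-trees, which are isomorphic as rooted trees), they fix no vertex by the parity argument you give, and therefore the setwise stabiliser of a geometric edge is a compact open subgroup contained in no vertex stabiliser; your fixed-point argument (compactness $\Rightarrow$ bounded orbits $\Rightarrow$ a fixed vertex or edge-midpoint, by the Tits--Serre lemma) then shows it is maximal. The correct classical statement is that the maximal compact open subgroups of $G$ are the vertex stabilisers \emph{together with} the setwise edge stabilisers---compare $PGL_2(\mathbb{Q}_p)$ on its tree, which has exactly two conjugacy classes of maximal compact subgroups---while the proposition as printed holds verbatim only for the index-two subgroup of type-preserving automorphisms, where inversions are absent. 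So what you have produced is a correct proof of a corrected statement, together with a genuine (if harmless) erratum for the paper: nothing later depends on maximality, only on $K$ being compact open with $(G,K)$ a Gelfand pair, which your argument fully establishes.
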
 
Thus $G$ is a locally compact, totally discontinuous, unimodular group, $K$ is an open compact subgroup of $G$, and $(G,K)$ is a Gelfand pair.\\
\newline\indent
The natural action of $G$ on $\mathbb{T}$, which is transitive and continuous, allows us to identify $\mathbb{T}$ with $G/K.$ Moreover $K$ acts transitively on spheres centered at 0. We shall identify functions defined on $\mathbb{T}$ with right $K-$ invariant functions on $G$, and radial functions on $\mathbb{T}$ with $K-$bi-invariant ones on $G$.\\
\indent
Let us recall some basic tools of harmonic analysis on $\mathbb{T}$ (see \cite{ftn}, \cite{ca}, \cite{cms2} for more details). We normalize the Haar measure on $G$ in such a way that $K$ has a unit mass. Then
$$\dsps\sum_{x\in\mathbb{T}}f(x)=\dsps\int_{G}f(g.o)\,dg$$
for all $f\in L^{1}(\mathbb{T}),$ and this allows us to define the convolution of two functions on $\mathbb{T}$ by
\begin{equation}\label{convolution}
f_1*f_2(g.o)=\dsps\int_{G}f_1(h.o)f_2(h^{-1}g.o)\,dh\,\,\;\forall g\in G.
\end{equation}
If $f_2$ is radial, then $(\ref{convolution})$ rewrites
$$f_1*f_2(x)=\dsps\sum_{n\geq 0}f_{2}(n)\dsps\sum_{y\in S(x,n)}f_1(y),$$
where $S(x,n)$ denotes the sphere with center $x$ and radius $n$. 
Notice that 
$$|S(x,n)|=
\begin{cases}
1\qquad\hspace{1.6cm}\textrm{if}\;n=0,\\
(Q+1)Q^{n-1}\quad\textrm{if}\;n\geq 1.
\end{cases}
$$
An interesting property of this convolution product is the following version of the Kunze-Stein phenomenon \cite{ks}, due to Nebbia \cite{N} (see also \cite{co}).

\begin{propositione}
For all $1\le r\pp 2$, we have
\begin{equation}\label{ks}
L^{2}(\mathbb{T})*L^{r}(\mathbb{T})^{\sharp}\subset L^{2}(\mathbb{T}).
\end{equation}
By such an inclusion, we mean that there exists a constant $C_r\pg 0$ such that
$$\left\|f_1*f_2\right\|_{L^2}\le C_r\left\|f_1\right\|_{L^2}\left\|f_2\right\|_{L^r}\hspace{0.6cm}
\forall f_1\in L^2(\mathbb{T}),\,\forall f_2\in L^r(\mathbb{T})^{\sharp}.
$$
\end{propositione}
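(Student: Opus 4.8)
The plan is to prove the operator-norm estimate, i.e. that right convolution by the radial function $f_2$ is bounded on $L^2(\mathbb{T})$ with norm controlled by $\|f_2\|_{L^r}$. Using the radial convolution formula recalled above, I would first decompose
$$ f_1 * f_2 = \sum_{n\ge 0} f_2(n)\, \mathcal{S}_n f_1, \qquad \mathcal{S}_n f_1(x) = \sum_{y\in S(x,n)} f_1(y), $$
where $\mathcal{S}_n$ denotes convolution by the indicator of the sphere of radius $n$. By the triangle inequality the problem splits into two ingredients: a uniform bound on $\|\mathcal{S}_n\|_{L^2\to L^2}$, and a summation of the resulting series against $f_2$ by Hölder.

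For the first ingredient I expect $\|\mathcal{S}_n\|_{L^2\to L^2}\lesssim (1+n)\,Q^{n/2}$. I would get this from spherical harmonic analysis on $\mathbb{T}$: since $f\mapsto \mathcal{S}_n f$ is convolution by a radial kernel, the spherical Plancherel theorem diagonalises it, so that $\|\mathcal{S}_n\|_{L^2\to L^2}=\sup_s |S(o,n)|\,|\varphi_s(n)|$, the supremum being over the tempered spectrum and $\varphi_s$ the spherical functions. The key input is the uniform estimate $|\varphi_s(n)|\lesssim (1+n)\,Q^{-n/2}$, which together with $|S(o,n)|\asymp Q^n$ yields the claim. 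Equivalently, and more elementarily, one notes that the $\mathcal{S}_n$ are polynomials in the adjacency operator $\mathcal{A}=\mathcal{S}_1$ through the three-term recursion $\mathcal{A}\,\mathcal{S}_n=\mathcal{S}_{n+1}+Q\,\mathcal{S}_{n-1}$ (valid for $n\ge 2$, with a harmless modification at $n=1$); since the $L^2$-spectrum of $\mathcal{A}$ is exactly $[-2\sqrt{Q},\,2\sqrt{Q}]$, solving the recursion by Chebyshev polynomials gives the same bound.

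For the second ingredient I would apply Hölder with exponents $r$ and $r'$, splitting
$$ \sum_{n\ge 0} |f_2(n)|\,(1+n)\,Q^{n/2} = \sum_{n\ge 0}\Big(|f_2(n)|\,|S(o,n)|^{1/r}\Big)\Big((1+n)\,Q^{n/2}\,|S(o,n)|^{-1/r}\Big). $$
The first factor reproduces $\|f_2\|_{L^r}$, while the second factor is summable precisely because its summand is $\asymp (1+n)^{r'}\,Q^{\,n r'(1/2-1/r)}$, whose geometric part has negative exponent exactly when $r<2$. This is where the hypothesis $r<2$ enters, and it pinpoints the main obstacle: the estimate is critical at $r=2$, where the spherical functions just fail to belong to $L^{2}$ and the series diverges, so the whole argument hinges on the sharp decay $|\varphi_s(n)|\lesssim (1+n)Q^{-n/2}$ (equivalently, on the value $2\sqrt{Q}$ of the spectral radius of $\mathcal{A}$) being uniform in the spectral parameter $s$. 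Establishing that uniform decay is the technical heart of the proof; once it is in hand, everything else is the elementary bookkeeping above.
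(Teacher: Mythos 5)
Your argument is correct, but note that the paper does not prove this proposition at all: it is quoted as a known result, attributed to Nebbia \cite{N} (see also Cowling \cite{co}), so there is no in-paper proof to compare against. What you have written is essentially the classical proof from those references, namely Herz's \emph{principe de majoration}: decompose right convolution by the radial kernel into sphere averages $\mathcal{S}_n$, bound $\|\mathcal{S}_n\|_{L^2\to L^2}=\sup_\lambda |S(o,n)|\,|\varphi_\lambda(n)|\lesssim (1+n)\ssf Q^{n/2}$ via the spherical Plancherel theorem, and sum by H\"older against $\|f_2\|_{L^r}$, the geometric series converging exactly when $r<2$. All the ingredients you invoke are available in Section 2 of the paper: the explicit formula for $\varphi_\lambda$ gives the uniform bound $|\varphi_\lambda(n)|\lesssim (1+n)\ssf Q^{-n/2}$ for real $\lambda$ (the only point requiring care, which you correctly flag, is the cancellation between the two terms $\mathbf{c}(\pm\lambda)\ssf Q^{(-1/2\pm i\lambda)n}$ near the poles of $\mathbf{c}$ at $\lambda\in(\tfrac\tau2)\ssf\Z$, where one uses $|\sin((n+1)\theta)/\sin\theta|\le n+1$), and your alternative route via the three-term recursion for the $\mathcal{S}_n$ and Kesten's value $2\sqrt{Q}$ for the spectral radius of the adjacency operator is an equally valid, more elementary packaging of the same estimate. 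In short: no gap, and your write-up could serve as a self-contained proof of the cited result.
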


\begin{corollarye}\label{lq estimate}
For all $2\pp q,r\pp\infty,$ such that $\frac{q}{2}\pp r\pp q,$ one has
\begin{equation}\label{dualks}
L^{q^{\prime}}(\mathbb{T})*L^{r}(\mathbb{T})^{\sharp}\subset L^{q}(\mathbb{T}).
\end{equation}
\end{corollarye}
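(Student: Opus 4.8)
The plan is to deduce the inclusion $(\ref{dualks})$ from the Kunze--Stein estimate $(\ref{ks})$ by complex interpolation against an elementary endpoint, interpolating simultaneously in the source, the kernel and the target exponents. The first endpoint is the Kunze--Stein inclusion furnished by the Proposition,
\begin{equation*}
L^{2}(\mathbb{T})*L^{\rho}(\mathbb{T})^{\sharp}\subset L^{2}(\mathbb{T}),\qquad 1\le\rho\pp2,
\end{equation*}
and the second is the trivial bound coming straight from $(\ref{convolution})$,
\begin{equation*}
|f_{1}*f_{2}(g.o)|\le\dsps\int_{G}|f_{1}(h.o)|\,|f_{2}(h^{-1}g.o)|\,dh\le\|f_{1}\|_{L^{1}}\,\|f_{2}\|_{L^{\infty}},
\end{equation*}
that is $L^{1}(\mathbb{T})*L^{\infty}(\mathbb{T})^{\sharp}\subset L^{\infty}(\mathbb{T})$ with constant $1$, valid for every $f_{2}$ (radial or not).

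Given $2\pp q\pp\infty$ and $\frac{q}{2}\pp r\pp q$, I would set
\begin{equation*}
\theta=1-\tfrac{2}{q}\in(0,1)\qquad\text{and}\qquad\rho=\tfrac{2r}{q}.
\end{equation*}
The constraint $\frac{q}{2}\pp r\pp q$ is exactly what forces $1\pp\rho\pp2$, so the Proposition does apply at the first endpoint. Interpolating the bilinear convolution map between $L^{2}\times L^{\rho}\to L^{2}$ and $L^{1}\times L^{\infty}\to L^{\infty}$ with parameter $\theta$, the three reciprocal exponents read
\begin{equation*}
\tfrac{1-\theta}{2}+\theta=1-\tfrac{1}{q}=\tfrac{1}{q'},\qquad\tfrac{1-\theta}{\rho}=\tfrac{2}{q\rho}=\tfrac{1}{r},\qquad\tfrac{1-\theta}{2}=\tfrac{1}{q},
\end{equation*}
which identifies the source, kernel and target as $L^{q'}$, $L^{r}(\mathbb{T})^{\sharp}$ and $L^{q}$, with a finite constant $\le C_{\rho}^{\,1-\theta}$. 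This is precisely $(\ref{dualks})$.

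The step needing real care, and the reason one cannot merely interpolate the linear operator $f_{1}\mapsto f_{1}*f_{2}$ for a \emph{fixed} kernel, is the bookkeeping of the kernel norm: a fixed-kernel Riesz--Thorin argument would only yield the factor $\|f_{2}\|_{L^{\rho}}^{1-\theta}\|f_{2}\|_{L^{\infty}}^{\theta}$, which by log-convexity of the $L^{p}$ norms dominates $\|f_{2}\|_{L^{r}}$ and is therefore too weak. I would instead interpolate in the kernel itself, applying Stein's analytic interpolation theorem to the admissible family $T_{z}f_{1}=f_{1}*f_{2}^{(z)}$ with $f_{2}^{(z)}=|f_{2}|^{\frac{q}{2}(1-z)}\,\tfrac{f_{2}}{|f_{2}|}$, so that $f_{2}^{(\theta)}=f_{2}$ while along $\Re z=0$ one has $\|f_{2}^{(it)}\|_{L^{\rho}}=\|f_{2}\|_{L^{r}}^{q/2}$ and along $\Re z=1$ one has $\|f_{2}^{(1+it)}\|_{L^{\infty}}=1$; raising these to the powers $1-\theta$ and $\theta$ reproduces exactly the single norm $\|f_{2}\|_{L^{r}}$. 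Since $f_{2}^{(z)}$ is radial whenever $f_{2}$ is, the Kunze--Stein endpoint $(\ref{ks})$ remains available all along the line $\Re z=0$, so the whole interpolation stays within the radial class and nothing further is required.
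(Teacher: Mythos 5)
Your proposal is correct and takes essentially the same route as the paper: the corollary is obtained by interpolating the bilinear convolution map between the Kunze--Stein endpoint and a trivial Young--H\"older endpoint, and your analytic-family construction (interpolating in the kernel as well as in the source and target exponents, with $\theta=1-\tfrac2q$ and $\rho=\tfrac{2r}{q}$) just supplies the details that the paper's one-sentence proof leaves implicit. The only difference is bookkeeping of the endpoints --- the paper phrases them as the dual version of $(\ref{ks})$ against $L^{1}(\mathbb{T})*L^{1}(\mathbb{T})^{\sharp}\subset L^{1}(\mathbb{T})$, while you pair $(\ref{ks})$ itself against $L^{1}(\mathbb{T})*L^{\infty}(\mathbb{T})^{\sharp}\subset L^{\infty}(\mathbb{T})$, which is the pairing whose exponent arithmetic visibly produces the asymmetric $L^{q'}\to L^{q}$ bound.
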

The proof is obtained by interpolation between the dual version of $(\ref{ks})$ and the trivial inclusion $L^{1}(\mathbb{T})*L^{1}(\mathbb{T})^\sharp\subset L^{1}(\mathbb{T}).$\\
The combinatorial Laplacian on $\mathbb{T}$ is defined by 
$$\mathcal{L}=I-\mathcal{M}$$
where $I$ denotes the identity map and $\mathcal{M}$ the mean operator 
\begin{equation}\label{mean}
\mathcal{M}f(x)=\frac{1}{Q+1}\dsps\sum_{y\in S(x,1)}f(y).
\end{equation}
Notice that $\mathcal{L}$ is a convolution operator associated to a radial function
$$\mathcal{L}f=f*(\delta_o-\nu),$$
where $\delta_o$ denotes the Dirac measure at o and $\nu$ the normalized uniform measure on $S(x,1)$.\\
Let us next recall the main ingredients of spherical analysis on $\mathbb{T}$. Set $\tau=\frac{2\pi}{\log Q}$ , and consider the holomorphic function
$$\gamma(\lambda)=\frac{Q^{i\lambda}+Q^{-i\lambda}}{Q^{\frac{1}{2}}+Q^{-\frac{1}{2}}}=\frac{2}{Q^{\frac{1}{2}}+Q^{-\frac{1}{2}}}\cos\lambda(\log Q)=\gamma(0) \cos \frac{2\pi}{\tau}\lambda.$$
The spherical function $\varphi_{\lambda}$ of index $\lambda\in\mathbb{C}$ is the unique radial eigenfunction of $\mathcal{M},$ which is associated to the eigenvalue $\gamma(\lambda)$ and which is normalized by $\varphi_\lambda(0)=1.$ Here is an explicit expression~:
\begin{displaymath}
\varphi_{\lambda}(n)= \begin{cases}
\mathbf{c}(\lambda)\,Q^{(-\frac{1}{2}+i\lambda)n}+c(-\lambda)\,Q^{(-\frac{1}{2}-i\lambda)n} \quad \textrm{if $\lambda\notin(\frac{\tau}{2})\,\mathbb{Z}$,}

\\(-1)^{jn}\;(1+\frac{Q^{\frac{1}{2}}-Q^{-\frac{1}{2}}}{Q^{\frac{1}{2}}+Q^{-\frac{1}{2}}}n)Q^{-\frac{n}{2}} \quad \hspace{1cm}\textrm{if $\lambda\in(\frac{\tau}{2})j\,\mathbb{Z},$}
\end{cases}
\end{displaymath}
where $\mathbf{c}$ is the meromorphe function given by
$$\mathbf{c}(z)=\frac{1}{Q^\frac{1}{2}+Q^{-\frac{1}{2}}}\,\frac{Q^{\frac{1}{2}+iz}-Q^{-\frac{1}{2}-iz}}{Q^{iz}-Q^{-iz}}\;\;\forall z\in\mathbb{C}\backslash (\frac{\tau}{2})\,\mathbb{Z}.$$
\\
Notice the symmetries
 \begin{equation}
\varphi_{-\lambda}=\varphi_\lambda\;\textrm{and}\;\varphi_{\lambda+\tau}=\varphi_{\lambda}\quad\forall\,\lambda\in\mathbb{C}.
\end{equation}
\\
The spherical Fourier transform  of a radial function $f$ on $\mathbb{T}$, let say with finite support, is then defined by the formula
$$\mathcal{H}f(\lambda)=\dsps\sum_{x\in\mathbb{T}}f(x)\,\varphi_\lambda(x)=f(0)+\dsps\sum_{n\geq 1}(1+Q)\,Q^{n-1}\,f(n)\,\varphi_{\lambda}(n)\;\;\;\forall\lambda\in\mathbb{C}.$$
 By the above symmetries of the spherical functions, $\mathcal{H}f$ is even and $\tau$-periodic. The following inverse and Plancherel formulae hold~:
\begin{equation*}
f(n)=\frac{Q^{\frac{1}{2}}}{Q^{\frac{1}{2}}+Q^{-\frac{1}{2}}}\,\,\frac{1}{\tau}\int_{0}^{\frac{\tau}{2}}\frac{d\lambda}{|c(\lambda)|^{2}}\,\,\mathcal{H}f(\lambda)\,\,\varphi_{\lambda}(n),
\end{equation*}
and
\begin{equation*}
\left\|f\right\|_{2}^{2}=\frac{Q^{\frac{1}{2}}}{Q^{\frac{1}{2}}+Q^{-\frac{1}{2}}}\,\,\frac{1}{\tau}\dsps\left(\int_{0}^{\frac{\tau}{2}}\frac{d\lambda}{|c(\lambda)|^{2}}\,\,\dsps\left|\mathcal{H}f(\lambda)\right|^{2}\dsps\right)\,\,\,\,\,\,\,\;\forall f\in L^{2}(\mathbb{T})^{\sharp}.
\end{equation*}
The Abel transform of a radial function $f$ on  $\mathbb{T}$ is defined by 
\begin{equation}\label{abel}
\mathcal{A}f(n)=Q^{\frac{|n|}{2}}f(|n|)+\frac{Q^{\frac{1}{2}}-Q^{-\frac{1}{2}}}{Q^{\frac{1}{2}}}\dsps\sum_{k\geq 1}Q^{\frac{|n|}{2}+k}f(|n|+2k).
\end{equation}
Then $\mathcal{A}f$ is even and
\begin{equation}\label{fourier}
\mathcal{H}=\mathcal{F}\circ\mathcal{A}
\end{equation}
where
\begin{equation}\label{fourier Z}
\mathcal{F}g(\lambda)=\dsps\sum_{n\in\mathbb{Z}}g(\lambda)\,Q^{in\lambda}=\,g(0)+2\dsps\sum_{n=1}^{\infty}g(\lambda)\,\cos(\frac{2\pi}{\tau}\lambda\,n).
\end{equation}
Hence 
\begin{equation}\label{fourier inverse}
\mathcal{H}^{-1}=\mathcal{A}^{-1}\circ\mathcal{F}^{-1},
\end{equation}
where the Abel transform $(\ref{abel})$ is inverted by
$$
f(n)=\dsps\sum_{k=0}^{\infty}Q^{-\frac{n}{2}-k}\left\{\mathcal{A}f(n+2k)-\mathcal{A}f(n+2k+2)\right\}\, ,
$$
and the Fourier transform $(\ref{fourier Z})$ by
\begin{equation*}
g(n)=\frac{1}{\tau}\int_{-\frac{\tau}{2}}^{\frac{\tau}{2}}d\lambda\,\,\mathcal{F}g(\lambda)\,Q^{-i\lambda\,n}=\frac{1}{\pi}\int_{0}^{\pi}d\lambda\;\mathcal{F}g(\frac{\tau}{2\pi}\lambda)\;\cos(\lambda\,n)\, .
\end{equation*}

\section{Dispersive and Strichartz estimates}
$\,$
\vspace{0.7cm}
Consider first the homogeneous linear equation on $\mathbb{T}$~:
\begin{equation}\label{shl}
\begin{cases}
i\partial_{t}u(t,x)+ \mathcal{L}u(t,x)=0,\\
u(0,x)=f(x)~,
\end{cases}
\end{equation}
whose solution is given by 
$$u(t,x)=e^{it\mathcal{L}}f(x)=f*s_t(x).$$
Here $s_t(x)$ is the radial convolution kernel associated to the Schr\"odinger operator $e^{it\mathcal{L}},$ whose spherical Fourier transform is given by $$\mathcal{H}s_t(\lambda)=e^{it[1-\gamma(\lambda)]}.
$$
The following expression is easily deduced from $(\ref{fourier inverse})$~:
\begin{equation}\label{noyau}
s_t(n)=e^{it}\frac{2}{\pi}\dsps\sum_{k\geq 0}Q^{-\frac{n}{2}-k}\int_{0}^{\pi}d\lambda\,e^{-i\gamma(0)t\cos\lambda}\sin\lambda\,\sin\lambda(n+2k+1).
\end{equation}
\noindent
\begin{propositione}\label{pointwise}
The following pointwise kernel estimates hold,
uniformly in \,$t\!\in\!\R$ \ssf and \,$n\!\in\!\N$\,{\rm:}
\begin{center}
{\rm(22)}\hfill
$|s_{\ssf t}(n)|\lesssim\begin{cases}
\,Q^{-\frac n2}
&\text{if \,}|t|\!<\!1\ssf,\\
\,|t|^{-\frac32}\,(1\ssb+\ssb n)^{2}\,Q^{-\frac n2}
&\text{if \,}|t|\!\ge\!1\ssf.
\end{cases}$\hfill${}$
\end{center}
\end{propositione}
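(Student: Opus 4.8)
The plan is to start from the explicit kernel formula $(\ref{noyau})$ and reduce everything to classical Bessel function estimates. Abbreviating by $\gamma(0)=\frac{2}{Q^{1/2}+Q^{-1/2}}$ the fixed positive constant and setting
$$
I_m(t)=\int_{0}^{\pi}e^{-i\gamma(0)\ssf t\cos\lambda}\,\sin\lambda\,\sin(m\lambda)\,d\lambda,
$$
formula $(\ref{noyau})$ reads $s_t(n)=e^{it}\frac{2}{\pi}\sum_{k\ge0}Q^{-\frac n2-k}\,I_{n+2k+1}(t)$. The first step is to evaluate $I_m(t)$ in closed form. Using $2\sin\lambda\sin(m\lambda)=\cos((m-1)\lambda)-\cos((m+1)\lambda)$, the Bessel integral representation $\int_0^\pi e^{iz\cos\lambda}\cos(j\lambda)\,d\lambda=\pi\ssf i^{\,j}J_j(z)$, the parity $J_j(-w)=(-1)^jJ_j(w)$, and the recurrence $J_{m-1}(z)+J_{m+1}(z)=\frac{2m}{z}J_m(z)$, I obtain
$$
I_m(t)=\frac{\pi\,m}{\gamma(0)\,t}\,(-i)^{m-1}J_m(\gamma(0)t),
\qquad\text{so}\qquad
|I_m(t)|=\frac{\pi\,m}{\gamma(0)\,|t|}\,\bigl|J_m(\gamma(0)t)\bigr|.
$$

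The case $|t|<1$ is then immediate: bounding $|I_m(t)|\le\int_0^\pi|\sin\lambda\sin(m\lambda)|\,d\lambda\le\pi$ uniformly in $m$ and summing the geometric series $\sum_{k\ge0}Q^{-\frac n2-k}=\frac{Q}{Q-1}Q^{-\frac n2}$ yields $|s_t(n)|\lesssim Q^{-\frac n2}$.

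For $|t|\ge1$ I will insert the closed form above, so that after pulling out $Q^{-n/2}/|t|$ the claim reduces to
$$
\Sigma:=\sum_{k\ge0}Q^{-k}\,(n+2k+1)\,\bigl|J_{n+2k+1}(\gamma(0)t)\bigr|\;\lesssim\;|t|^{-\frac12}(1+n)^2 .
$$
Writing $z=\gamma(0)|t|\asymp|t|$, I will split the sum according to the three standard regimes of $J_m(z)$: the oscillatory range $m\le z/2$, where $|J_m(z)|\lesssim(z^2-m^2)^{-1/4}\lesssim z^{-1/2}$; the transition range $z/2<m<2z$, where only the uniform Landau bound $|J_m(z)|\lesssim z^{-1/3}$ is available; and the range $m\ge2z$, where I simply use $|J_m(z)|\le1$. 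In the oscillatory range the geometric series gives at once $\lesssim z^{-1/2}\sum_{k\ge0}Q^{-k}(n+2k+1)\lesssim z^{-1/2}(1+n)$, which is within the target.

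The two remaining ranges are the heart of the matter, and here the weight $Q^{-k}$ and the polynomial factor $(1+n)^2$ do the work. When the indices $m=n+2k+1$ reaching the transition or exponential range occur only for large $k$, this forces $n\lesssim z$ and makes $Q^{-k}$ exponentially small in $z$, so their contribution is negligible. When instead such indices already occur at $k=0$, this forces $n\gtrsim z$ and hence $|t|\lesssim n$: there is then no decay in $t$ to extract, but the crude geometric bound $\Sigma\lesssim(1+n)$ is absorbed by the target, since $|t|\lesssim n$ gives $|t|^{-1/2}(1+n)^2\gtrsim n^{-1/2}(1+n)^2\gtrsim n^{3/2}\gg(1+n)$. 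Combining the three ranges yields $\Sigma\lesssim|t|^{-1/2}(1+n)^2$, and multiplying back by $Q^{-n/2}/|t|$ produces the desired $|s_t(n)|\lesssim|t|^{-3/2}(1+n)^2Q^{-n/2}$. I expect the bookkeeping in the transition range $m\approx z$ — matching the $z^{-1/3}$ Bessel loss against the polynomial slack in $(1+n)^2$ — to be the only genuinely delicate point; everything else is a geometric summation.
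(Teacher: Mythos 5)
Your argument is correct, and up to the Bessel reduction it coincides with the paper's: your identity $I_m(t)=\frac{\pi m}{\gamma(0)t}(-i)^{m-1}J_m(\gamma(0)t)$, obtained from the product-to-sum formula and the recurrence $J_{m-1}+J_{m+1}=\frac{2m}{z}J_m$, is exactly the paper's integration by parts, which produces the factor $\frac{n+2k+1}{\gamma(0)t}$ and the integral $J(t,m)=\int_0^\pi e^{-i\gamma(0)t\cos\lambda}\cos(m\lambda)\,d\lambda=\pi(-i)^mJ_m(\gamma(0)t)$. Where you diverge is in the endgame. The paper's Lemma 3.2 proves the single \emph{uniform} bound $|J(t,m)|\lesssim|t|^{-1/2}(1+m)$ for all $m$ and $|t|\ge1$ (either by citing classical Bessel estimates or, in detail, by a bump-function decomposition of $[0,\pi]$ with van der Corput at the stationary endpoints $\lambda=0,\pi$ and a nonstationary integration by parts near $\pi/2$); after that the $k$-sum is a one-line geometric summation giving $(1+n)^2$ with no case analysis. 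You instead keep the regime-dependent Bessel asymptotics and compensate in the non-oscillatory regimes by playing the weight $Q^{-k}$ against a dichotomy between $n\lesssim z$ and $n\gtrsim z$. Both work; the paper's packaging is cleaner (one reusable estimate, trivial summation), while yours uses only textbook Bessel bounds and in fact shows that the Landau $z^{-1/3}$ bound is never needed, since $|J_m(z)|\le1$ already suffices wherever the oscillatory bound fails. One small point of precision: your dichotomy as phrased (``indices reach the transition range only for large $k$'') does not literally make $Q^{-k}$ exponentially small in $z$ when $n$ sits just below $z/2$; you should place the threshold quantitatively, e.g.\ at $1+n\le\sqrt z$ versus $1+n>\sqrt z$. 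In the second case the crude bound $\Sigma\lesssim 1+n$ is already $\le z^{-1/2}(1+n)^2$, and in the first case every transition index forces $k\gtrsim z$, so $Q^{-k}$ genuinely beats any power of $z$. With that adjustment the proof closes.
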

The estimate is easily seen to hold for $|t|\pp 1$. For $|t|\ge 1$, we integrate by parts $(\ref{noyau})$ and apply the next lemma to the resulting expression 
\begin{equation*}
 s_t(n)=\frac{2i}{\pi\,\gamma(0)}\,\frac{e^{it}}{t^{-1}}\,\sum_{k\ge 0}Q^{-\frac{n}{2}-k} \,(n+2k+1)\,\int_{0}^{\pi}d\lambda\,e^{-i\gamma(0)t\cos(\lambda)}\,\cos(n+2k+1)\lambda.
\end{equation*}

\noindent
\begin{lemmae}\label{jtree}
Consider the integral
\begin{equation}\label{OI}
J(t,m)=\int_{\,0}^{\ssf\pi}\!d\lambda\;
e^{\,i\ssf c\ssf t\cos\lambda}\,\cos{m}\ssf\lambda\,,
\end{equation}
where \,$c\!>\!0$ \ssf is a fixed constant.
Then there exists a constant \,$C\!>\!0$
\ssf such that
\begin{equation*}
|J(t,m)|\le C\,|t|^{-\frac12}\,{(1\!+\ssb m)}
\end{equation*}
for every \,${m\!\in\!\N}$ \ssf
and for every \,$t\!\in\!\R$ \ssf with \,$|t|\!\ge\!1$\ssf.
\end{lemmae}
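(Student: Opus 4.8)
The plan is to treat $J(t,m)$ as an oscillatory integral with phase $\phi(\lambda)=c\cos\lambda$ (with the parameter $t$ playing the role of the large frequency) and amplitude $\cos m\lambda$, and to extract the decay from van der Corput's lemma. First I would reduce to $t\ge1$: since $\cos m\lambda$ is real, one has $J(-t,m)=\overline{J(t,m)}$, hence $|J(-t,m)|=|J(t,m)|$, and it suffices to bound the integral for $t\ge1$.

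The crucial observation is that $\phi'(\lambda)=-c\sin\lambda$ and $\phi''(\lambda)=-c\cos\lambda$ cannot both be small simultaneously, because $\sin^2\lambda+\cos^2\lambda=1$. Accordingly I would split $[0,\pi]=I_1\cup I_2$ with $I_1=[0,\tfrac\pi4]\cup[\tfrac{3\pi}4,\pi]$ and $I_2=[\tfrac\pi4,\tfrac{3\pi}4]$. On each component of $I_1$ one has $|\phi''(\lambda)|=c|\cos\lambda|\ge c/\sqrt2$ with $\phi''$ of constant sign, so the second-derivative van der Corput estimate (with the amplitude $\cos m\lambda$) applies and yields $|t|^{-1/2}$ decay; on $I_2$ one has $|\phi'(\lambda)|=c|\sin\lambda|\ge c/\sqrt2$ with $\phi'$ monotone on each of $[\tfrac\pi4,\tfrac\pi2]$ and $[\tfrac\pi2,\tfrac{3\pi}4]$, so the first-derivative version (equivalently, integration by parts) applies there and yields the even better $|t|^{-1}$ decay.

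The point where the power of $m$ enters is the amplitude. Van der Corput's estimate with amplitude bounds the integral by $|t|^{-1/2}$ (respectively $|t|^{-1}$ on $I_2$) times $\sup_\lambda|\cos m\lambda|+\mathrm{Var}(\cos m\lambda)$. Since $\sup|\cos m\lambda|=1$ and $\mathrm{Var}_{[0,\pi]}(\cos m\lambda)=\int_0^\pi m|\sin m\lambda|\,d\lambda\lesssim m$, each piece is controlled by a constant times $|t|^{-1/2}(1+m)$, the $I_2$ contribution being of size $|t|^{-1}(1+m)\le|t|^{-1/2}(1+m)$ since $|t|\ge1$. Summing the two contributions gives the stated bound.

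I expect the main technical care to be twofold. First, neither a single second-derivative nor a single first-derivative van der Corput estimate works on all of $[0,\pi]$: the phase $\cos\lambda$ has stationary points of its first derivative at the endpoints $0,\pi$ and a degeneracy of its second derivative at $\pi/2$, so the decomposition exploiting $\sin^2\lambda+\cos^2\lambda=1$ is essential. Second, one must carefully track the total variation of the amplitude through the van der Corput bound, since this is exactly what produces the factor $(1+m)$. (As a sanity check, this integral equals $\pi\,i^{m}J_m(ct)$ up to a constant, and the bound is consistent with the known decay of Bessel functions; but I would keep the argument self-contained via van der Corput.)
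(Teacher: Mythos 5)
Your proposal is correct and follows essentially the same route as the paper's own argument: the paper likewise splits $[0,\pi]$ into a piece around $\lambda=\pi/2$, treated by integration by parts using $|\sin\lambda|$ bounded below (giving $|t|^{-1}(1+m)$), and pieces near the endpoints $0,\pi$, treated by the van der Corput corollary with amplitude from Stein (Corollary p.~334), which is exactly where the factor $\sup|\cos m\lambda|+\mathrm{Var}(\cos m\lambda)\lesssim 1+m$ enters. The only cosmetic differences are that the paper uses a smooth partition of unity centered at $0,\pi/2,\pi$ rather than sharp cutoffs at $\pi/4,3\pi/4$, and that it also records the Bessel-function identity $J(t,m)=\pi i^m J_m(ct)$ as an alternative proof, which you mention only as a sanity check.
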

\begin{proof}[Proof]
This estimate is obtained either
by expressing \eqref{OI} in terms of Bessel functions\,:
\begin{equation*}
J(t,m)=\pi\,i^{\ssf m}\ssf J_m(c\,t)
\end{equation*}
(see for instance \cite[(10.9.2)]{DLMF}),
and by using classical estimates for these functions,
or by analyzing the oscillatory integral \eqref{OI}
as in \cite[Section 7.1]{Stein}.
More precisely,
let $\chi$ be a smooth bump function around the origin such that
\begin{equation*}
\sum\nolimits_{\ssf\ell\in\mathbb{Z}}
\chi\ssf(\lambda\!-\ssb\ell\ssf\tfrac\pi2)=1
\qquad\forall\,\lambda\!\in\!\mathbb{R}
\end{equation*}
 and let us split up
\begin{equation*}
J(t,m)=\ssf\sum\nolimits_{\ssf\ell=0}^{\,2}\ssf\underbrace{
\int_{\,0}^{\ssf\pi}\!d\lambda\,\chi(\lambda\!-\ssb\ell\ssf\tfrac\pi2)\,
e^{\,i\ssf c\ssf t\cos\lambda}\ssf\cos{m}\ssf\lambda
}_{J_\ell(t,m)}\,.
\end{equation*}
On one hand, we obtain
\begin{equation*}
|J_1(t,m)|\le C\,|t|^{-1}\ssf(1\!+\ssb m)\,,
\end{equation*}
after performing an integration by parts based on
\begin{equation*}
e^{\,i\ssf c\ssf t\cos\lambda}
=\tfrac i{c\,t\ssf \sin\lambda}\,
\tfrac{\partial}{\partial\lambda}
\bigl(e^{\,i\ssf c\ssf t\cos\lambda}\bigr)\,.
\end{equation*}
On the other hand,
by applying \cite[Corollary p.\;334]{Stein},
we obtain
\begin{equation*}
|J_0(t,m)|\le C\,|t|^{-\frac12}\ssf(1\!+\ssb m)
\quad\text{and}\quad
|J_2(t,m)|\le C\,|t|^{-\frac12}\ssf(1\!+\ssb m)\,.
\end{equation*}
\end{proof}
\begin{corollarye}\label{stlq}
For any $q\pg 2$, the following kernel estimate hold~:
\begin{equation*}
\left\|s_{t}\right\|_{L^{q}(\mathbb{T})}\lesssim\begin{cases}
1 \hspace{2mm}\qquad\textrm{if}\;\;|t|<1
\\|t|^{-\frac{3}{2}} \quad \textrm{if}\;\;|t|\geq 1.
\end{cases}
\end{equation*}
\end{corollarye}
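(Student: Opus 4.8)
The plan is to read off this $L^{q}$ bound directly from the pointwise kernel estimate of Proposition \ref{pointwise} by summing over the spheres of $\mathbb{T}$. Since $s_{t}$ is radial, its $L^{q}$ norm is
\[
\left\|s_{t}\right\|_{L^{q}(\mathbb{T})}^{q}=\dsps\sum_{n\geq 0}|S(o,n)|\,|s_{t}(n)|^{q},
\]
where I recall that $|S(o,0)|=1$ and $|S(o,n)|=(Q+1)\,Q^{n-1}\lesssim Q^{n}$ for $n\geq 1$. The whole computation thus reduces to summing a series in which the exponential volume growth $Q^{n}$ competes against the exponential spatial decay $Q^{-n/2}$ carried by the kernel, and the point is that for $q\pg 2$ the decay wins.

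For $|t|\pp 1$, inserting the bound $|s_{t}(n)|\lesssim Q^{-n/2}$ gives
\[
\left\|s_{t}\right\|_{L^{q}(\mathbb{T})}^{q}\lesssim\dsps\sum_{n\geq 0}Q^{n}\,Q^{-nq/2}=\dsps\sum_{n\geq 0}Q^{(1-\frac{q}{2})n}.
\]
Since $q\pg 2$ the exponent $1-\frac{q}{2}$ is strictly negative, so this is a convergent geometric series bounded by a constant depending only on $q$ and $Q$, whence $\left\|s_{t}\right\|_{L^{q}}\lesssim 1$. For $|t|\geq 1$, inserting $|s_{t}(n)|\lesssim|t|^{-3/2}(1+n)^{2}\,Q^{-n/2}$ gives
\[
\left\|s_{t}\right\|_{L^{q}(\mathbb{T})}^{q}\lesssim|t|^{-\frac{3q}{2}}\dsps\sum_{n\geq 0}(1+n)^{2q}\,Q^{(1-\frac{q}{2})n}.
\]
Again $1-\frac{q}{2}\pp 0$, so the geometric decay dominates the polynomial factor $(1+n)^{2q}$ and the series converges to a finite constant; taking $q$-th roots yields $\left\|s_{t}\right\|_{L^{q}}\lesssim|t|^{-3/2}$, as claimed.

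There is no genuine obstacle here: the argument is a single application of the pointwise bound followed by the summation of a geometric series. The only point that deserves attention, and precisely where the hypothesis $q\pg 2$ (rather than $q\geq 2$) is used, is the convergence of $\sum_{n}Q^{(1-\frac{q}{2})n}$: at the endpoint $q=2$ the summand equals $Q^{0}=1$ and the series diverges, which is consistent with the fact that the $L^{2}$ bound cannot be extracted pointwise but comes instead from $L^{2}$ conservation. I would also stress that the polynomial growth $(1+n)^{2q}$ appearing in the large-time regime is harmless, being absorbed by any amount of geometric decay.
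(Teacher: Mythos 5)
Your proof is correct and follows essentially the same route as the paper: express the $L^{q}$ norm of the radial kernel as a sum over spheres, insert the pointwise bounds of Proposition \ref{pointwise}, and observe that for $q\pg 2$ the geometric decay $Q^{(1-\frac{q}{2})n}$ absorbs both the volume growth and the polynomial factor $(1+n)^{2q}$. The only omission is the endpoint $q=\infty$, which is included in the statement and which the paper treats separately; there the $L^{q}$ norm is a supremum rather than a sum, and the bound follows at once from Proposition \ref{pointwise} since $(1+n)^{2}\,Q^{-n/2}$ is bounded in $n$.
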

\begin{proof}[Proof]
The case $q=\infty$ follows immediately from Proposition $\ref{pointwise}$. Assume that $2\pp q\pp\infty.$ Then, as  $s_t$ is a radial kernel, we have
$$\left\|s_{t}\right\|^q_{L^{q}(\mathbb{T})}= |s_t(0)|^q+(Q+1)\dsps\sum_{n=1}^{\infty}Q^{n-1}\,|s_{t}(n)|^{q}.$$
We conclude by using Proposition $\ref{pointwise}.$ On one hand,
if $|t|\le 1$
$$\left\|s_{t}\right\|^{q}_{L^{q}}\lesssim 1+(Q+1)\dsps\sum_{n=1}^{\infty}Q^{n-1}\,Q^{-\frac{nq}{2}}\asymp\dsps\sum_{n=0}^{\infty}Q^{-n(\frac{q}{2}-1)}\lesssim 1$$
On the other hand, if $|t|\geq 1,$
\begin{align*}
\left\|s_{t}\right\|_{L^{q}}^{q}&\lesssim|t|^{-\frac{3q}{2}}[1+(Q+1)\dsps\sum_{n=1}^{\infty}Q^{n-1}\,(1+Q)\,Q^{-\frac{nq}{2}}n^{2q}]\\&\asymp\left|t\right|^{-\frac{3q}{2}}\dsps\sum_{n=0}^{\infty}Q^{-n(\frac{q}{2}-1)}(1+n)^{2q}\\&\lesssim|t|^{-\frac{3}{2}q}.
\end{align*}
\end{proof}
Let us turn to $L^{q^{\prime}}(\mathbb{T})\to L^q(\mathbb{T})$ mapping properties of the Schr\"odinger operator $e^{it\mathcal{L}}$. 
\begin{theoreme}\label{theo dispersiv}
Let $2\pp q\le\infty.$ Then the following dispersive estimates hold~:
\begin{equation}\label{disp}
\left\|e^{it\mathcal{L}}\right\|_{L^{q^{\prime}}(\mathbb{T})\to L^{q}(\mathbb{T})}\lesssim
\begin{cases}
1 \hspace{2mm}\qquad\textrm{if}\;\;|t|<1,
\\|t|^{-\frac{3}{2}} \quad\textrm{if}\;\;|t|\geq 1.
\end{cases}
\end{equation}

 In the case $q=2$, recall that $e^{it\mathcal{L}}$ is a one parameter group of unitary operators .
\end{theoreme}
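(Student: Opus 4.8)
The plan is to exploit the factorization $u(t,x)=e^{it\mathcal{L}}f(x)=f*s_t(x)$ and to reduce the operator bound to the kernel estimates of Corollary \ref{stlq} via the convolution inequality of Corollary \ref{lq estimate}. Thus the whole statement amounts to showing $\|f*s_t\|_{L^q}\lesssim \delta(t)\,\|f\|_{L^{q'}}$, where $\delta(t)=1$ for $|t|<1$ and $\delta(t)=|t|^{-3/2}$ for $|t|\ge 1$.

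First I would dispose of the endpoint $q=\infty$, where $q'=1$. Young's inequality gives $\|f*s_t\|_{L^\infty}\le\|f\|_{L^1}\,\|s_t\|_{L^\infty}$, and the factor $\|s_t\|_{L^\infty}=\sup_{n}|s_t(n)|$ is controlled directly by Proposition \ref{pointwise}. Indeed, for $|t|\ge 1$ one has $|s_t(n)|\lesssim|t|^{-3/2}(1+n)^2Q^{-n/2}$, and since $\sup_{n\in\N}(1+n)^2Q^{-n/2}<\infty$ (exponential decay beats polynomial growth because $Q\ge 2>1$), this yields $\|s_t\|_{L^\infty}\lesssim|t|^{-3/2}$; for $|t|<1$ the same proposition gives $\|s_t\|_{L^\infty}\lesssim 1$.

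Next, for $2<q<\infty$, I would route the dispersion through an intermediate radial $L^r$-norm of the kernel. Since $s_t$ is radial, Corollary \ref{lq estimate} applies and gives $\|f*s_t\|_{L^q}\lesssim\|f\|_{L^{q'}}\,\|s_t\|_{L^r}$ for any $r$ with $q/2<r<q$. The key observation is that, because $q>2$, the interval $(\max\{2,q/2\},q)$ is nonempty, so one may pick such an $r$ with the additional property $r>2$. For that $r$, Corollary \ref{stlq} applies and delivers exactly $\|s_t\|_{L^r}\lesssim 1$ when $|t|<1$ and $\|s_t\|_{L^r}\lesssim|t|^{-3/2}$ when $|t|\ge1$. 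Combining the two bounds produces the claimed estimate on $\|e^{it\mathcal{L}}\|_{L^{q'}\to L^q}$. Finally, the excluded value $q=2$ is not a decay statement: from the Plancherel formula together with $|\mathcal{H}s_t(\lambda)|=|e^{it[1-\gamma(\lambda)]}|=1$ one gets $\|e^{it\mathcal{L}}f\|_{L^2}=\|f\|_{L^2}$, so $e^{it\mathcal{L}}$ is a one-parameter group of unitary operators.

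The main obstacle is the middle step: one must check that the admissibility window $q/2<r<q$ of Corollary \ref{lq estimate} overlaps the range $r>2$ demanded by Corollary \ref{stlq}. Everything hinges on this compatibility, which holds precisely because we restrict to $q>2$, and which is exactly what forces the strict inequality in the hypothesis.
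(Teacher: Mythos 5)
Your proposal is correct and follows essentially the same route as the paper: the endpoint $q=\infty$ via Young's inequality and the pointwise kernel bound, and the range $2\pp q\pp\infty$ via Corollary \ref{lq estimate} applied to the radial kernel $s_t$ combined with Corollary \ref{stlq}, choosing $r$ in the (nonempty) window $\max\{2,q/2\}\pp r\pp q$. Your explicit check that this window overlaps $r\pg 2$ is a small point the paper leaves implicit, but the argument is otherwise identical.
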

\begin{proof}[Proof]
If $q=\infty$, we use the elementary inequality 
$$
\left\|e^{it\mathcal{L}}f\right\|_{L^{\infty}}=\left\|f*s_t\right\|_{L^{\infty}}\le\,\left\|f\right\|_{L^1}\left\|s_t\right\|_{L^{\infty}}\quad \forall\, f\in L^{1}(\mathbb{T})
$$
and Proposition $\ref{pointwise}$ or Corollary $\ref{stlq}$
to conclude.
Assume $2\pp q\pp \infty$ and let $2\pp r\pp\infty$ such that $\frac{q}{2}\pp r\pp q.$
According to Corollary $(\ref{lq estimate}),$ we have 
$$\left\|e^{it\mathcal{L}}f\right\|_{L^{q}}=\left\|f*s_t\right\|_{L^q}\lesssim\left\|f\right\|_{L^{{q}^{\prime}}}\left\|s_t\right\|_{L^r}\quad \forall f\in L^{q^{\prime}}(\mathbb{T}).$$
We conclude again using Corollary $\ref{stlq}$.
\end{proof}
Theorem $\ref{theo dispersiv}$ can be generalized as follows.
\begin{corollarye}
Let $2\pp q,\,\tilde{q},\pp\infty.$ Then
$$\left\|e^{it\mathcal{L}}\right\|_{L^{{\tilde q}^{\prime}}\to L^q}\lesssim
\begin{cases}
1\qquad\hspace{0.6cm}\textrm{if}\;\;|t|\le 1,\\
|t|^{-\frac32}\qquad\textrm{if}\;\;|t|\ge 1.
\end{cases}
$$
\end{corollarye}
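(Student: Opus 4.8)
The plan is to deduce the off-diagonal bound directly from the diagonal dispersive estimate of Theorem~\ref{theo dispersiv}, using a structural feature peculiar to the discrete setting. Since $\mu$ is the counting measure, the Lebesgue spaces on $\mathbb{T}$ are nested in the direction \emph{opposite} to the Euclidean one: whenever $1\le a\le b\le\infty$ one has $L^{a}(\mathbb{T})\subseteq L^{b}(\mathbb{T})$ together with $\|g\|_{L^{b}(\mathbb{T})}\le\|g\|_{L^{a}(\mathbb{T})}$, as is seen at once by writing $|g(x)|^{b}=|g(x)|^{b-a}\ssf|g(x)|^{a}$ and bounding $|g(x)|^{b-a}\le\|g\|_{L^{\infty}}^{\,b-a}\le\|g\|_{L^{a}}^{\,b-a}$. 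This monotonicity is the only ingredient I would add to Theorem~\ref{theo dispersiv}.

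Concretely, I would set $s=\min(q,\tilde{q})$, so that $2\pp s\le\infty$ and the diagonal estimate applies at the exponent $s$. By construction $s\le q$, whence $L^{s}(\mathbb{T})\subseteq L^{q}(\mathbb{T})$ and $\|e^{it\mathcal{L}}f\|_{L^{q}}\le\|e^{it\mathcal{L}}f\|_{L^{s}}$; likewise $s\le\tilde{q}$ gives $s'\ge\tilde{q}'$, hence $L^{\tilde{q}'}(\mathbb{T})\subseteq L^{s'}(\mathbb{T})$ and $\|f\|_{L^{s'}}\le\|f\|_{L^{\tilde{q}'}}$. Sandwiching the diagonal bound of Theorem~\ref{theo dispersiv} (at exponent $s$) between these two inclusions gives, for every $f\in L^{\tilde{q}'}(\mathbb{T})$,
\begin{equation*}
\left\|e^{it\mathcal{L}}f\right\|_{L^{q}}\le\left\|e^{it\mathcal{L}}f\right\|_{L^{s}}\lesssim\kappa(t)\,\left\|f\right\|_{L^{s'}}\le\kappa(t)\,\left\|f\right\|_{L^{\tilde{q}'}},
\end{equation*}
where $\kappa(t)=1$ for $|t|\le1$ and $\kappa(t)=|t|^{-3/2}$ for $|t|\ge1$, which is exactly the asserted estimate.

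I do not expect a genuine analytic obstacle here: all the work sits in Theorem~\ref{theo dispersiv} and in the elementary Kunze--Stein estimate behind it. The one point that must be handled with care is the \emph{direction} of the inclusions, which is reversed relative to the continuous (finite-measure) situation, so that one embeds the source space $L^{\tilde{q}'}$ into $L^{s'}$ and the value space $L^{s}$ into $L^{q}$, rather than the other way round; the choice $s=\min(q,\tilde{q})$ is dictated precisely by making both inclusions point the right way. Finally, since $s>2$ in every case (and $s=\infty$ only when $q=\tilde{q}=\infty$, which is still admissible for Theorem~\ref{theo dispersiv}), the diagonal estimate is always available, so no separate treatment of the endpoints is needed.
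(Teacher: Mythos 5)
Your argument is correct, and it takes a genuinely different route from the paper's. The paper first establishes the two extreme cases by elementary convolution inequalities --- for $q=\infty$ it uses $\|f*s_t\|_{L^\infty}\le\|f\|_{L^{\tilde{q}'}}\|s_t\|_{L^{\tilde{q}}}$, and for $\tilde{q}=\infty$ it uses $\|f*s_t\|_{L^q}\le\|f\|_{L^1}\|s_t\|_{L^q}$, both combined with the bounds on $\|s_t\|_{L^r}$ from Corollary~\ref{stlq} --- and then obtains the general off-diagonal pair $(q,\tilde{q})$ by interpolating these two estimates with the diagonal case $q=\tilde{q}$ of Theorem~\ref{theo dispersiv}. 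You instead observe that, $\mu$ being the counting measure, the Lebesgue spaces are nested the ``wrong way round'', $\|g\|_{L^b}\le\|g\|_{L^a}$ for $a\le b$, and you sandwich the diagonal estimate at the single exponent $s=\min(q,\tilde{q})$ between the embeddings $L^{\tilde{q}'}\subseteq L^{s'}$ and $L^{s}\subseteq L^{q}$. The directions of both embeddings check out ($s\le\tilde{q}$ gives $s'\ge\tilde{q}'$, and $s\le q$), one always has $s>2$ (with $s=\infty$ only when $q=\tilde{q}=\infty$, still covered by Theorem~\ref{theo dispersiv}), and the operator is defined on $L^{\tilde{q}'}\subseteq L^2$, so the chain of inequalities is legitimate. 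Your proof is the more economical one: it needs only the diagonal theorem plus the trivial $\ell^a\hookrightarrow\ell^b$ inclusions, whereas the paper's proof additionally invokes the kernel bounds of Corollary~\ref{stlq} and a two-endpoint interpolation; neither approach yields a sharper conclusion than the other here, since the decay rate $|t|^{-3/2}$ is the same for every admissible pair.
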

\begin{proof}[Proof]
If $q=\infty$, we use the elementary inequality
$$\left\|e^{it\mathcal L}f\right\|_{L^\infty}=\left\|f*s_t\right\|_{L^\infty}\le\left\|f\right\|_{L^{{\tilde{q}}^{\prime}}}\left\|s_t\right\|_{L^{\tilde{q}}}$$
and Corollary $\ref{stlq}$ to conclude.
If $\tilde{q}=\infty$, we use instead
$$\left\|e^{it\mathcal{L}}f\right\|_{L^q}=\left\|f*s_t\right\|_{L^q}\le\left\|f\right\|_{L^1}\left\|s_t\right\|_{L^q}$$
and conclude similarly.
The general case is obtained by interpolation between these two cases and the case $q=\tilde{q}$ considered in Theorem $\ref{theo dispersiv}$.
\end{proof}
Consider next the inhomogeneous linear Schr\"odinger equation on $\mathbb{T}$~:
\begin{equation}\label{sch2}
\begin{cases}
i\partial_{t}u(t,x)+\mathcal{L}u(t,x)=F(t,x),\\
u(0,x)=f(x),
\end{cases}
\end{equation}
whose solution is given by Duhamel's formula~:
\begin{equation}\label{duhamel}
u(t,x)=e^{it\mathcal{L}}f(x)-i\int_{0}^{t}e^{i(t-s)\mathcal{L}}F(s,x)\,ds.
\end{equation}
Recall the square $\mathcal{C}=(0,\frac12]\times[0,\frac12)\cup\{(0,\frac12)\}$ 
(see Fig.2.1).
\begin{theoreme}\label{theo strichartz}
Assume that $(\frac{1}{p},\frac{1}{q})$ and $(\frac{1}{\tilde{p}},\frac{1}{\tilde{q}})$ belong to $\mathcal{C}$. Then the following Strichartz estimate holds for solutions to the Cauchy problem $(\ref{sch2})$~:
\begin{equation}\label{strichartz}
\left\|u\right\|_{L^{p}_{t}L^{q}_{x}}\lesssim\left\|f\right\|_{L^{2}_{x}}+\left\|F\right\|_{L^{\tilde{p}^{\prime}}_{t}L^{\tilde{q}^{\prime}}_{x}}.
\end{equation}
\end{theoreme}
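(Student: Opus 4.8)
The plan is to split the Duhamel representation \eqref{duhamel} into its homogeneous part $e^{it\mathcal L}f$ and its inhomogeneous (retarded) part, and to reduce every estimate to a one–dimensional convolution inequality in the time variable against the single kernel
\[
\kappa(t)=\min\{1,|t|^{-3/2}\}.
\]
The whole point is that, by the dispersive estimate (Theorem~\ref{theo dispersiv}) and its $L^{\tilde q'}\!\to\!L^{q}$ generalization, the propagator is controlled in operator norm by $\kappa(t-s)$, and that $\kappa\in L^{r}(\R)$ for every $r\ge1$: it is bounded near the origin (discreteness of $\mathbb T$ removes any local singularity) and integrable at infinity (the strong $|t|^{-3/2}$ dispersion). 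This global integrability is exactly what will let Young's inequality replace the delicate admissibility bookkeeping of the Euclidean setting, and it is why the admissible set can be the full square $\mathcal C$.

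First I would prove the homogeneous estimate $\|e^{it\mathcal L}f\|_{L^p_tL^q_x}\lesssim\|f\|_{L^2_x}$ for $(\tfrac1p,\tfrac1q)\in\mathcal C$ by the $TT^*$ method. Writing $T f=e^{it\mathcal L}f$, one has $\|T\|^2=\|TT^*\|$ with $TT^*G(t)=\int_{\R}e^{i(t-s)\mathcal L}G(s)\,ds$; bounding the $L^q_x$-norm of the integrand by $\kappa(t-s)\|G(s)\|_{L^{q'}_x}$ and applying Young with $\kappa\in L^{p/2}(\R)$ (legitimate since $p\ge2$ gives $p/2\ge1$) yields $\|TT^*G\|_{L^p_tL^q_x}\lesssim\|G\|_{L^{p'}_tL^{q'}_x}$, hence the claim whenever $q>2$. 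The remaining corner $(p,q)=(\infty,2)$ is nothing but the $L^2$–conservation $\|e^{it\mathcal L}f\|_{L^2_x}=\|f\|_{L^2_x}$. Dualizing this homogeneous bound gives, for $(\tfrac1{\tilde p},\tfrac1{\tilde q})\in\mathcal C$, the adjoint estimate $\big\|\int_{\R}e^{-is\mathcal L}H(s)\,ds\big\|_{L^2_x}\lesssim\|H\|_{L^{\tilde p'}_tL^{\tilde q'}_x}$, which I will need below.

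Next I would treat the inhomogeneous term $\int_0^t e^{i(t-s)\mathcal L}F(s)\,ds$. In the main case $q,\tilde q>2$ the generalized dispersive estimate gives
\[
\Big\|\int_0^t e^{i(t-s)\mathcal L}F(s)\,ds\Big\|_{L^q_x}\le\int_0^{|t|}\kappa(t-s)\,\|F(s)\|_{L^{\tilde q'}_x}\,ds\le(\kappa*h)(t),\qquad h(s)=\|F(s)\|_{L^{\tilde q'}_x},
\]
the last inequality simply because a nonnegative integrand is enlarged when the retarded domain is replaced by all of $\R$. Young's inequality with $\tfrac1r=\tfrac1p+\tfrac1{\tilde p}\le1$ (so $r\ge1$ and $\kappa\in L^r$) then closes the estimate, \emph{without} any recourse to the Christ--Kiselev lemma, since the positivity of $\kappa$ makes the retarded and full kernels interchangeable. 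The two endpoints are handled separately: for output $(p,q)=(\infty,2)$ I would use unitarity to write $\|\int_0^t e^{i(t-s)\mathcal L}F\,ds\|_{L^2}=\|\int_0^t e^{-is\mathcal L}F\,ds\|_{L^2}$ and apply the adjoint estimate to the truncated datum $\mathbf 1_{[0,t]}F$; for input $(\tilde p,\tilde q)=(\infty,2)$ I would move the $L^p_tL^q_x$ norm inside the $s$–integral by Minkowski and invoke the homogeneous estimate fiberwise in $s$; when both couples equal $(\infty,2)$ the bound is again immediate from unitarity. Summing the homogeneous and inhomogeneous contributions via the triangle inequality gives \eqref{strichartz}.

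The genuinely delicate points are precisely the lines $q=2$ and $\tilde q=2$, which in $\mathcal C$ occur only at the corner $(\infty,2)$: there the propagator has \emph{no} dispersive decay, the time kernel degenerates to the constant $1\notin L^r$, and the direct Young argument collapses. The way around this, as indicated above, is to never place $L^2_x$ on the output side through convolution but to exploit $L^2$–conservation and duality instead, and to handle the $L^1_tL^2_x$ input through Minkowski together with the already–proven homogeneous estimate. The only other place demanding care is the justification of the pointwise domination of the retarded Duhamel integral by the full positive convolution $\kappa*h$, which rests on the nonnegativity of both $\kappa$ and $h$ and on Minkowski's integral inequality.
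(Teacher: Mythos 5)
Your proposal is correct and follows essentially the same route as the paper: both arguments reduce everything to the dispersive bound, exploit that the time kernel $\min\{1,|t|^{-3/2}\}$ lies in every $L^r(\R)$ so that Young's inequality applies for all exponents in $\mathcal C$, use positivity to dominate the retarded Duhamel integral by the full convolution, settle the corner $(\infty,2)$ by $L^2$ conservation, and decouple indices by the $T\ssf T^*$ argument. Your write-up is merely more explicit than the paper's about the endpoint cases and about using the generalized $L^{\tilde q'}\!\to\!L^q$ dispersive estimate to decouple $q$ from $\tilde q$ in the retarded term, but there is no substantive difference in method.
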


\begin{proof}[Proof]
We proceed as in the Euclidean case (\cite{k}, \cite{gv}), or in  the hyperbolic case (\cite{ap}, \cite{is}). Consider the operator
$$Tf(t,x)=e^{it\mathcal{L}_x}f(x)$$
and its adjoint 
$$T^*F(x)=\int_{\mathbb{R}}e^{-is\mathcal{L}_x}F(s,x)\,ds.$$
The method consists in proving the $L^{p^{\prime}}_{t}L^{q^{\prime}}_{x}\to L^{p}_{t}L^{q}_{x}$ boundedness of the operator 
$$TT^*F(t,x)=\int_{\mathbb{R}}e^{i(t-s)\mathcal{L}_x}F(s,x)\,ds$$
and of its truncated version
$$\widetilde{TT^*}F(t,x)=\int_{0}^t e^{i(t-s)\mathcal{L}_x}F(s,x)\,ds,$$
for every $(\frac1p,\frac1q)\in\mathcal{C}$. The endpoint $(0,\frac{1}{2})$ is settled by $L^2$ conservation and we are left with the couples $(\frac1p,\frac1q)\in (0,\frac12]\times [0,\frac12)$. Thus we consider the operator 
$$AF(t,x)=\int_{a(t)}^{b(t)}e^{i(t-s)\mathcal{L}_x}F(s,x)\,ds,$$
where $a(t)\pp b(t)$ are suitable functions of $t\in\mathbb{R}.$
 Then
\begin{align*}
\left\|AF\right\|_{L^{p}_{t}L^{q}_{x}}&=\left\|\int_{a(t)}^{b(t)}e^{i(t-s)\mathcal{L}_x}F(s,x)\,ds\right\|_{L^{p}_{t}L^{q}_{x}}\\&\lesssim\left\|\dsps\int_{(a(t),b(t))\cap\left\{|t-s|<1\right\}}e^{i(t-s)\mathcal{L}_x}F(s,x)\,ds\right\|_{L^{p}_{t}L^{q}_{x}}\\&+\left\|\dsps\int_{(a(t),b(t))\cap\left\{|t-s|\geq 1\right\}}e^{i(t-s)\mathcal{L}_x}F(s,x)\,ds\right\|_{L^{p}_{t}L^{q}_{x}}\\&\lesssim\left\|\int_{|t-s|\pp 1}\left\|F(s,x)\right\|_{L^{q^{\prime}}_{x}}\,ds\right\|_{L^{p}_{t}}+\left\|\int_{|t-s|\geq 1}\left|t-s\right|^{-\frac{3}{2}}\left\|F(s,x)\right\|_{L^{q^{\prime}}_{x}}\right\|_{L^{p}_{t}}\,.
\end{align*}

\hspace{-0.5cm}On $\mathbb{R}$, the convolution kernels
$\1_{\{|t-s|\pp 1\}}$ and $|t-s|^{-\frac{3}{2}}\1_{\left\{|t-s|\geq 1\right\}}$ define bounded operators from 
$L^{p_1}_s$ to $L^{p_2}_t$ , for all $1\le p_1\le p_2\le\infty,$ in particular from $L^{p^{\prime}}_s$ to $L^{p}_t$, for all $2\le p\le\infty.$ By choosing suitably $a(t)$ and $b(t)$ we deduce the $L^{p^{\prime}}_t L^{q^{\prime}}_x\to L^p_t L^q_x$ boundedness of $TT^*$ and $\widetilde{TT^*}$. Indices are finally decoupled, using the $TT^*$ argument.
\end{proof}

\section{Well-posedness}
Strichartz estimates for inhomogeneous linear equations are used to prove local or global well-posedness for nonlinear problems. In this section we obtain some results along these lines 
for the Schr\"odinger equation $(\ref{semilin})$ on $\mathbb{T},$ with a power-like nonlinearity F as in $(\ref{nonlinearity}).$
Let us recall the definition of well-posedness in $L^2(\mathbb{T})$.
\begin{defne}
The NLS equation $(\ref{semilin})$ is locally well-posed in $L^2(\mathbb{T})$ if, for any bounded subset $B$ of $L^2(\mathbb{T})$ there exists $T\pg 0$ and a Banach space $X_T,$ continuously embedded into $C((-T,+T);L^2(\mathbb{T}))$, such that
\begin{itemize}
 \item for any Cauchy data $f\in B$, $(\ref{semilin})$ has a unique solution $u\in X_T$;
 \item the map $f\to u$ is continuous from $B$ to $X_T$.
\end{itemize}
The equation is globally well-posed if these properties hold with $T=\infty$.
\end{defne}
As we have obtained better estimates on $\mathbb{T}$ than on $\mathbb{R}^{n}$ or $\mathbb{H}^{n}$, we may expect a better well-posedness results. We shall prove indeed well-posedness with no restriction on $\gamma$.
\begin{theoreme}\label{well posedness}
Let $1\pp\gamma\pp 1$. Then the NLS $(\ref{semilin})$ is globally well-posed for small $L^2$ data. It is locally well-posed for arbitrary $L^{2}$ data. Moreover, under the gauge invariance condition $(\ref{gauge})$, local solutions extend to global ones.
\end{theoreme}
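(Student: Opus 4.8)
The plan is to realize the solution of $(\ref{semilin})$ as a fixed point of the Duhamel map
$$\Phi(u)(t)=e^{it\mathcal{L}}f-i\int_{0}^{t}e^{i(t-s)\mathcal{L}}F(u(s))\,ds$$
on an interval $I\ni0$, inside a Strichartz space $X_I$ with norm $\|u\|_{L^\infty_tL^2_x(I)}+\|u\|_{L^p_tL^q_x(I)}$ for a couple $(\frac1p,\frac1q)\in\mathcal{C}$ to be chosen. The $L^\infty_tL^2_x$ component is legitimate since $(0,\frac12)\in\mathcal{C}$, and together with the strong continuity of $e^{it\mathcal{L}}$ on $L^2$ it places $X_I$ inside $C(I;L^2)$, as the definition of well-posedness requires. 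By Theorem \ref{theo strichartz} one has $\|\Phi(u)\|_{X_I}\lesssim\|f\|_{L^2}+\|F(u)\|_{L^{\tilde p'}_tL^{\tilde q'}_x(I)}$, so everything reduces to the nonlinear term. The power-like bounds $(\ref{nonlinearity})$ and H\"older's inequality in $x$ give $\|F(u)\|_{L^{\tilde q'}_x}\lesssim\|u\|_{L^{\gamma\tilde q'}_x}^{\gamma}$ as soon as $\gamma\tilde q'=q$, with the analogous Lipschitz control of $F(u)-F(v)$; the time integration then reduces to comparing $\gamma\tilde p'$ with $p$.

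The decisive point, and the reason for the unrestricted range $\gamma\pg1$, is that the whole square $\mathcal{C}$ is admissible, with no scaling line to respect. For the global small-data statement I take the symmetric couple $p=q=\tilde p=\tilde q=\gamma+1$, so that $\gamma\tilde q'=\gamma\tilde p'=\gamma+1$ matches both exponents exactly. Then $(\frac1p,\frac1q)=(\frac1{\gamma+1},\frac1{\gamma+1})$ lies in $\mathcal{C}$ for every $\gamma\pg1$, since $\frac1{\gamma+1}\in(0,\frac12)$, and on $I=\R$ one obtains $\|\Phi(u)\|_{X}\le C\|f\|_{L^2}+C\|u\|_X^{\gamma}$ together with $\|\Phi(u)-\Phi(v)\|_X\le C(\|u\|_X^{\gamma-1}+\|v\|_X^{\gamma-1})\|u-v\|_X$. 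As $\gamma\pg1$, these close a contraction on a small ball of $X$ whenever $\|f\|_{L^2}$ is small enough, giving a global solution.

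For arbitrary $L^2$ data I localize in time, trading the smallness of $\|f\|_{L^2}$ for the smallness of $|I|$. Keeping the spatial exponent $q=\tilde q=\gamma+1$, I now move the time exponents off the critical balance, for instance $\tilde p=2$ and $p=4\gamma$, so that both couples still lie in $\mathcal{C}$ while $\gamma\tilde p'=2\gamma\pp p$. H\"older in time then produces a genuine gain $\|u\|_{L^{2\gamma}_tL^{\gamma+1}_x(I)}^{\gamma}\le|I|^{1/4}\|u\|_{L^{p}_tL^{q}_x(I)}^{\gamma}$, and the contraction closes on the ball of radius $2C\|f\|_{L^2}$ in $X_I$ provided $|I|$ is small. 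Crucially, the resulting existence time depends only on $\|f\|_{L^2}$.

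Finally, under the gauge invariance assumption $(\ref{gauge})$ the conservation law $(\ref{conservation})$ yields $\|u(t)\|_{L^2}=\|f\|_{L^2}$ throughout the interval of existence; since the local existence time just obtained depends only on this conserved quantity, the local solution is continued by a uniform time step, and iterating covers all of $\R$. Uniqueness and continuous dependence are inherited from the fixed-point construction. I do not expect a genuine obstacle here: the only delicate bookkeeping is checking that the chosen couples sit in $\mathcal{C}$ and that the time-gain exponent stays strictly positive for every $\gamma\pg1$, both of which hold precisely because $\mathcal{C}$ is a full square. In contrast with the Euclidean setting, there is thus no admissibility condition constraining $\gamma$.
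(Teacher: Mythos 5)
Your proposal is correct and follows essentially the same route as the paper: a contraction mapping for the Duhamel operator in $C_tL^2_x\cap L^p_tL^q_x$, using the full admissible square $\mathcal{C}$ with the symmetric choice $p=q=\tilde p=\tilde q=\gamma+1$ for global small-data well-posedness, a H\"older-in-time gain $T^{\lambda}$ (obtained by taking the time exponents off the balance $p=\gamma\tilde p'$) for the local theory with existence time depending only on $\|f\|_{L^2}$, and iteration via $L^2$ conservation in the gauge-invariant case. The only cosmetic difference is your specific choice $\tilde p=2$, $p=4\gamma$ for the localized step, where the paper instead keeps general exponents subject to $\lambda=1-\tfrac\gamma p-\tfrac1{\tilde p}>0$.
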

\begin{proof}[Proof]
We resume the standard fixed point method based on Strichartz estimates. Define $u=\Psi(v)$ as the solution to the Cauchy problem
\begin{equation}\label{vproblem}
\begin{cases}
i\partial_t u(t,x)+\,\mathcal{L}_x u(t,x)=F(v(t,x)),
\\
u(0,x)=f(x),
\end{cases}
\end{equation}
which is given by Duhamel's formula~:
$$u(t,x)=e^{it\mathcal{L}_x}f(x)-i\int_{0}^{t}e^{i(t-s)\mathcal{L}_x}F(v(s,x))\,ds.$$
According to Theorem $\ref{theo strichartz}$ we have \begin{equation}\label{estimate}
\left\|u\right\|_{L^{\infty}_{t}L^{2}_{x}}+\left\|u\right\|_{L^{p}_{t}L^{q}_{x}}\lesssim\,\left\|f\right\|_{L^{2}_{x}}+\left\|F(v)\right\|_{L^{{\tilde{p}}^{\prime}}_{t}L^{{\tilde{q}}^{\prime}}_{x}}
\end{equation}
for every $2\pp p,\,\tilde{p}\le\infty$,\;\;$2\le q,\,\tilde{q}\pp\infty$. Moreover
$$\left\|F(v)\right\|_{L^{{\tilde{p}}^{\prime}}_{t}L^{{\tilde{q}}^{\prime}}_{x}}\lesssim\left\|\,|v|^{\gamma}\,\right\|_{L^{{\tilde{p}}^{\prime}}_{t}L^{{\tilde{q}}^{\prime}}_{x}}\leq\left\|v\right\|^{\gamma}_{L^{{\tilde{p}}^{\prime}\gamma}_{t}L^{{\tilde{q}}^{\prime}\gamma}_{x}}$$
by our nonlinearity assumption $(\ref{nonlinearity})$. Thus
\begin{equation}\label{str}
\left\|u\right\|_{L^{\infty}_{t}L^{2}_{x}}+\left\|u\right\|_{L^{p}_{t}L^{q}_{x}}\lesssim\left\|f\right\|_{L^{2}_{x}}+\left\|v\right\|^{\gamma}_{L^{{\tilde{p}}^{\prime}\gamma}_{t}L^{{\tilde{q}}^{\prime}\gamma}_{x}}.
\end{equation}
In order to remain within the same function space, we require 
\begin{equation}\label{p,q}
p={\tilde{p}}^{\prime}\gamma\,,\,q\le{\tilde{q}}^{\prime}\gamma.
\end{equation}
It is clear that these conditions are fullfilled if we take for instance
$$p=q=\tilde{p}=\tilde{q}=1+\gamma.$$
For such a choice $\Psi$ maps $L^{\infty}(\mathbb{R},L^{2}(\mathbb{T}))\cap L^{p}(\mathbb{R},L^{q}(\mathbb{T}))$ into itself and actually $X=C(\mathbb{R},L^{2}(\mathbb{T}))\cap L^{p}(\mathbb{R},L^{q}(\mathbb{T})$ into itself. Since $X$ is a Banach space for the norm
$$\left\|u\right\|_{X}=\left\|u\right\|_{L^{\infty}_{t}L^{2}_{x}}+\left\|u\right\|_{L^{p}_{t}L^{q}_{x}},$$
it remains to show that $\Psi$ is a contraction in the ball 
$$X_{\varepsilon}=\left\{u\in X | \left\|u\right\|_{X}\le\varepsilon\right\},$$
provided $\varepsilon\pg 0$ and $\left\|f\right\|_{L^2}$ are sufficiently small. Let $v_1,v_2\in X$ and  $u_1=\Psi(v_1),\,u_2=\Psi(v_2)$.
Arguing as above and using in addition H\"older's inequality, we estimate
\begin{align*}
\left\|u_{1}-u_{2}\right\|_{X}&\lesssim\left\|F(v_{1})-F(v_{2})\right\|_{L^{{\tilde{p}}^{\prime}}_{t}L^{{\tilde{q}}^{\prime}}_{x}}\\&\lesssim\left\|\left\{|v_{1}|^{\gamma-1}+|v_{2}|^{\gamma-1}\right\}|v_{1}-v_{2}|\right\|_{L^{{\tilde{p}}^{\prime}}_{t}L^{{\tilde{q}}^{\prime}}_{x}}\\&\lesssim\left\{\left\|v_{1}\right\|^{\gamma-1}_{L^{p}_{t}L^{q}_{x}}+\left\|v_{2}\right\|^{\gamma-1}_{L^{p}_{t}L^{q}_{x}}\right\}\left\|v_{1}-v_{2}\right\|_{L^{p}_{t}L^{q}_{x}},
\end{align*}
hence
\begin{equation}\label{str1}
\left\|u_{1}-u_{2}\right\|_{X}\lesssim\left\{\left\|v_{1}\right\|_{X}^{\gamma-1}+\left\|v_{2}\right\|_{X}^{\gamma-1}\right\}\left\|v_{1}-v_{2}\right\|_{X}.
\end{equation}
If we assume $\left\|v_1\right\|_X\le\varepsilon\,,\left\|v_2\right\|_X\le\varepsilon\;\textrm{and}\;\left\|f\right\|_{L^2}\le\delta,$ then $(\ref{str})$ and $(\ref{str1})$ yield
$$\left\|u_{1}\right\|_{X}\le C(\delta+\varepsilon^{\gamma})\le\varepsilon\;,\left\|u_{2}\right\|_{X}\le C(\delta+\varepsilon^{\gamma})\le\varepsilon$$
and
$$\left\|u_{1}-u_{2}\right\|_{X}\le 2\,C\varepsilon^{\gamma-1}\left\|v_{1}-v_{2}\right\|_{X}\le \frac12\left\|v_1-v_2\right\|_X$$
if $\varepsilon^{\gamma-1}\le\frac{1}{4(C+1)}$ and $\delta\le\frac{3\epsilon}{4(C+1)}.$ We obtain our first conclusion by applying the fixed point theorem in the complete metric space $X_\varepsilon.$\\

Let us next show that $(\ref{semilin})$ is locally well-posed for arbitrary $L^2$ data . Consider a small interval $I=[-T,T]$. We proceed as above , except that we require $\frac{\gamma}{q}+\frac1q\pp1$ and get an additional factor $T^{\lambda}$ with $\lambda=1-\frac{\gamma}{q}-\frac{1}{\tilde{q}}\pg0,$ by applying H\"older's inequality in time. This way we obtain the estimates 
\begin{equation}\label{local}
\left\|u\right\|_{X}\le C(\left\|f\right\|_{L^{2}}+T^{\lambda}\left\|v\right\|^{\gamma}_{X}),
\end{equation} 
and
\begin{equation}
\left\|u_{1}-u_{2}\right\|_{X}\le C T^{\lambda}(\left\|v_{1}\right\|_{X}^{\gamma-1}+\left\|v_{2}\right\|_{X}^{\gamma-1})\left\|v_{1}-v_{2}\right\|_{X}
\end{equation} 
where $X=C(I;L^{2}(\mathbb{T}))\cap L^{p}(I,L^{q}(\mathbb{T}))$ and $p\in 
[2,\infty),\;q\in(2,\infty)$ satisfy $p\pg\gamma,\;q\pp 2\gamma.$\\
$\,$

As a consequence, we deduce that $\Psi$ is a contraction in the ball
$$X_M=\left\{u\in X|\,\left\|u\right\|_X\le M\,\right\},$$
provided $M\pg0$ is large enough respectively $T\pg 0$ is small enough, more precisely
 \begin{equation}\label{T condition}
C\left\|f\right\|_{L^2}\le \frac{3}{4} M\;\; \textrm{and}\;C T^\lambda M^{\gamma -1}\le\frac{1}{4}\,.
\end{equation}
We conclude as before.
Notice that according to $(\ref{T condition})$, T depends only on the $L^2$ norm of the initial data~:
$$T^{\lambda}\lesssim \left\|f\right\|^{-\frac{\gamma-1}{\lambda}}_{L^2}\,.$$
Thus if the nonlinearity $F$ is gauge invariant as in $(\ref{gauge}),$ then $L^2$ conservation allows us to iterate and deduce global from local existence, for arbitrary data $f\in L^2$. 
\end{proof}

\section{Scattering}

Consider still the NLS $(\ref{semilin})$ with a powerlike nonlinearity $F$ as in $(\ref{nonlinearity})$

\begin{theoreme}\label{scattering}
Assume that $1\pp\gamma\pp\infty.$ Then, under the gauge invariance condition $(\ref{gauge})$, for any data $f\in L^2(\mathbb{T}),$ the unique global solution $u$ provided by Theorem $\ref{well posedness}$ scatters to a linear solution, 
 that is there exist $u_\pm\in L^2$ such that
$$\left\|u(t)-e^{it\mathcal{L}}u_\pm\right\|_{L^{2}(\mathbb{T})}\to 0 \;\textrm{as}\;t\to\pm\infty.$$
\end{theoreme}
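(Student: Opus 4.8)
The plan is to reduce scattering to a single space-time integrability property of the global solution, and then to isolate the one genuinely hard point. Since $e^{it\mathcal{L}}$ is a one-parameter group of unitary operators on $L^2(\mathbb{T})$ (Theorem \ref{theo dispersiv} with $q=2$), Duhamel's formula $(\ref{duhamel})$ gives
$$e^{-it\mathcal{L}}u(t)=f-i\int_{0}^{t}e^{-is\mathcal{L}}F(u(s))\,ds,$$
so it suffices to show that the right-hand side converges in $L^2(\mathbb{T})$ as $t\to\pm\infty$. I would then define
$$u_{\pm}=f-i\int_{0}^{\pm\infty}e^{-is\mathcal{L}}F(u(s))\,ds\in L^2(\mathbb{T}),$$
and, using unitarity once more, rewrite the scattering statement as the tail identity
$$\left\|u(t)-e^{it\mathcal{L}}u_{\pm}\right\|_{L^2}=\left\|\int_{t}^{\pm\infty}e^{-is\mathcal{L}}F(u(s))\,ds\right\|_{L^2}.$$
Thus everything reduces to proving that the vector-valued integral $\int_0^{\pm\infty}e^{-is\mathcal{L}}F(u(s))\,ds$ converges, equivalently that its Cauchy tails vanish.

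To control these tails I would invoke the dual ($T^{*}$) Strichartz bound $\left\|\int_{I}e^{-is\mathcal{L}}G(s)\,ds\right\|_{L^2}\lesssim\left\|G\right\|_{L^{\tilde p'}_{t}L^{\tilde q'}_{x}(I)}$, which underlies Theorem \ref{theo strichartz} and is valid for every $(\frac1{\tilde p},\frac1{\tilde q})\in\mathcal{C}$. Choosing $\tilde p=\tilde q=\gamma+1$, admissible since $\frac1{\gamma+1}\in(0,\frac12)$ for $\gamma>1$, and using the nonlinearity bound $(\ref{nonlinearity})$ together with the identity $\gamma(\gamma+1)'=\gamma+1$, I obtain for $t<t'$
$$\left\|\int_{t}^{t'}e^{-is\mathcal{L}}F(u(s))\,ds\right\|_{L^2}\lesssim\left\|F(u)\right\|_{L^{(\gamma+1)'}_{t}L^{(\gamma+1)'}_{x}([t,t'])}\lesssim\left\|u\right\|^{\gamma}_{L^{\gamma+1}_{t}L^{\gamma+1}_{x}([t,t'])}.$$
Consequently, scattering follows at once provided the global space-time bound $\left\|u\right\|_{L^{\gamma+1}_{t}L^{\gamma+1}_{x}(\mathbb{R})}<\infty$ holds, since the tails of a convergent integral then tend to $0$. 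For \emph{small} $L^2$ data this finiteness is already built into Theorem \ref{well posedness}, where the fixed point is run directly in $X=C(\mathbb{R},L^2)\cap L^{\gamma+1}(\mathbb{R},L^{\gamma+1})$; so the small-data case is immediate.

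The main obstacle is therefore to upgrade the global solution for \emph{arbitrary} $L^2$ data, obtained in Theorem \ref{well posedness} by iterating the local theory with $L^2$ conservation, from $L^{\gamma+1}_{\mathrm{loc}}$ to the \emph{global} class $L^{\gamma+1}_{t}L^{\gamma+1}_{x}(\mathbb{R})$. This cannot come from the contraction argument alone: the self-consistent inequality $\left\|u\right\|_{L^{\gamma+1}_{t,x}}\lesssim\left\|f\right\|_{L^2}+\left\|u\right\|^{\gamma}_{L^{\gamma+1}_{t,x}}$ only closes under a smallness hypothesis on $\left\|f\right\|_{L^2}$, and a naive partition into Strichartz-small intervals accumulates an uncontrolled Duhamel error over the finitely many pieces. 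The extra ingredient I would exploit is the discreteness of $\mathbb{T}$: since $\ell^{q}(\mathbb{T})\hookrightarrow\ell^{2}(\mathbb{T})$ for $q\ge2$, conservation yields the uniform-in-time bound $\left\|u(t)\right\|_{L^{q}}\le\left\|u(t)\right\|_{L^{2}}=\left\|f\right\|_{L^{2}}$, which has no Euclidean counterpart. Combining this uniform spatial control with the strong $|t|^{-\frac32}$ dispersion—for instance through the bilinear form $\iint k(s-s')\left\|F(u(s))\right\|_{L^{q'}}\left\|F(u(s'))\right\|_{L^{q'}}\,ds\,ds'$ with $k(\sigma)=\min(1,|\sigma|^{-3/2})\in L^{1}(\mathbb{R})$—I would seek the global integrability of $s\mapsto\left\|F(u(s))\right\|_{L^{q'}_{x}}$ and thereby the finiteness of the global space-time norm. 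This quantitative large-data bound is the heart of the matter; once it is in hand, the two paragraphs above close the proof and produce the asymptotic states $u_{\pm}$.
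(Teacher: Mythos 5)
Your first two paragraphs are exactly the paper's argument: the paper applies the Cauchy criterion to $z(t)=e^{-it\mathcal{L}}u(t)$, writes the increment $z(t_2)-z(t_1)$ as a Duhamel integral, and bounds it by the dual Strichartz estimate,
\begin{equation*}
\bigl\|e^{-it_2\mathcal{L}}u(t_2)-e^{-it_1\mathcal{L}}u(t_1)\bigr\|_{L^2(\mathbb{T})}\;=\;\Bigl\|\int_{t_1}^{t_2}e^{-is\mathcal{L}}F(u(s))\,ds\Bigr\|_{L^2(\mathbb{T})}\;\lesssim\;\|u\|^{\gamma}_{L^{p}([t_1,t_2],L^{q}(\mathbb{T}))},
\end{equation*}
with the same choice $p=q=\gamma+1$ inherited from the fixed-point space of Theorem \ref{well posedness}. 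Up to that point you and the paper coincide.

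The divergence is in the last step. The paper closes the proof with the single sentence ``Since $u\in L^{p}(\mathbb{R},L^{q}(\mathbb{T}))$, the last expression vanishes,'' and you are right that this global membership is only produced by the contraction argument for \emph{small} data: for arbitrary data the global solution is built by iterating the local theory on intervals of fixed length, which yields bounds $\|u\|_{L^{p}([kT,(k+1)T],L^{q})}\le M$ whose sum over infinitely many intervals is not finite. You correctly identify this as the crux, but you then leave it unproved, and the strategy you sketch does not close: conservation together with the embedding $\ell^{2}(\mathbb{T})\hookrightarrow\ell^{r}(\mathbb{T})$ for $r\ge 2$ gives only the \emph{uniform-in-time} bound $\|F(u(s))\|_{L^{q'}_x}\lesssim\|f\|_{L^2}^{\gamma}$, which carries no integrability in $s$, so the bilinear expression $\iint k(s-s')\|F(u(s))\|_{L^{q'}}\|F(u(s'))\|_{L^{q'}}\,ds\,ds'$ with $k\in L^1(\mathbb{R})$ grows linearly in the length of the time window instead of staying bounded. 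As submitted, your proposal therefore proves scattering for small data and reduces the general case to an unestablished global space-time bound; that is a genuine gap. It is only fair to add that the paper's own proof rests on precisely the same unjustified assertion, so the weak point you have exposed is one the paper shares rather than resolves; a complete large-data argument would require an additional quantitative input (a genuine global Strichartz bound for large data) that appears in neither text.
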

\begin{proof}[Proof]
We will use the following Cauchy criterion~:
If $\left\|z(t_1)-z(t_2)\right\|_{L^2} \to0$ as $t_1,\,t_2,$ tend both to $\pm\infty$, then there exists $z_\pm \in L^2(\mathbb{T})$ such that $\left\|z(t)-z_\pm\right\|_{L^2}\to 0$ as $t\to\pm\infty$. According to Theorem $(\ref{theo strichartz})$ we have, for all $t_1\le t_2$
 
\begin{align*}
\left\|e^{-it_2\mathcal{L}}u(t_2)-e^{-it_1\mathcal{L}}u(t_1)\right\|_{L^2(\mathbb{T})}&=\left\|\int_{t_1}^{t_2}e^{-is\mathcal{L}}F(u(s))\,ds\right\|_{L^{2}(\mathbb{T})}\\&\lesssim\left\|u\right\|^{\gamma}_{L^{p}([t_1,t_2],L^{q}(\mathbb{T}))}.
\end{align*}
Since $u\in L^{p}(\mathbb{R},L^{q}(\mathbb{T}))$, the last expression vanishes as $t_1\le t_2$ tend both to $-\infty$ or to $+\infty$. Thus, using the Cauchy criterion above, one gets the desired result.
\end{proof}
\textbf{Remark}. If the nonlinearity is not gauge invariant we will still have scattering for small $L^2$ for all $1\pp\gamma\pp \infty$.

\end{document}